\documentclass[11pt,reqno]{amsart}
\usepackage{amsmath, amssymb, amsthm}
\usepackage{url}
\usepackage{relsize}
\usepackage[breaklinks]{hyperref}
\usepackage{autonum}
\usepackage{cite}
\usepackage{csquotes}
\usepackage{color}

\theoremstyle{plain}
\newtheorem{thm}{Theorem}[section]
\theoremstyle{plain}
\newtheorem{lem}[thm]{Lemma}

\newtheorem{cor}[thm]{Corollary}

\theoremstyle{definition}
\newtheorem{defi}{Definition}[section]
\newtheorem{rem}{Remark}[section]

\numberwithin{equation}{section}

\begin{document}

\title{On \(\mathcal{B}^4\)-Almost Periodicity for a Class of Arithmetical Functions}

\date{}

\author{Kritika Aggarwal}
\address{Kritika Aggarwal\\ Department of Mathematics\\
Indraprastha Institute of Information Technology IIIT, Delhi\\
Okhla, Phase III, New Delhi-110020, India.} 
\email{kritikaa@iiitd.ac.in}

\author{Debika Banerjee}
\address{Debika Banerjee\\ Department of Mathematics\\
Indraprastha Institute of Information Technology IIIT, Delhi\\
Okhla, Phase III, New Delhi-110020, India.} 
\email{debika@iiitd.ac.in}

\author{Shubham Gupta}
\address{Shubham Gupta\\ Department of Mathematics\\
Indraprastha Institute of Information Technology IIIT, Delhi\\
Okhla, Phase III, New Delhi-110020, India.} 
\email{shubhamgupta2587@gmail.com, shubhamg@iiitd.ac.in}

\thanks{2020 \textit{Mathematics Subject Classification.} Primary 11N37, 11K70, 11N60. Secondary 33C10.\\
\textit{Keywords and phrases.} Almost periodicity, Limit probability distribution, Dirichlet series, Bessel functions.}

\begin{abstract} 
In this paper, we establish the \(\mathcal{B}^4\)-almost periodicity in the sense of Besicovitch for a suitably normalized error term associated with a broad class of arithmetical functions introduced by Chandrasekharan and Narasimhan. This result significantly strengthens \(\mathcal{B}^2\)-almost periodicity previously investigated in the literature for related error terms. By deriving truncated Vorono\"{i}-type formulas, we demonstrate that the normalized error term lies in the Besicovitch space $B^4$. As a consequence, we deduce that the error term admits a limit probability distribution and establish an explicit formula for its mean fourth power moment. 
\end{abstract}

\maketitle

\section{Introduction}
Bohr  \cite{BO19251, BO19252, BO19253} introduced the theory of almost periodic functions in 1925, as a natural generalization of periodic functions. 
His theory was developed primarily for uniformly continuous functions, although this setting was not the most general one for which his methods and results hold true.  This motivated subsequent efforts to extend the theory to broader classes of functions. The first such extension was given by Stepanoff \cite{STE1926}, who removed the continuity assumption and characterized almost periodicity through mean values over intervals of fixed length rather than pointwise function values. A further and more comprehensive generalization was introduced by Besicovitch \cite{BES1926}, who defined almost periodic functions as limits of trigonometric polynomials with respect to a mean convergence that is weaker than uniform convergence.

For each \(q \geq 1\), Besicovitch introduced the space \(B^q\) of generalized almost periodic functions, which serves as an analogue of the Lebesgue space \(L^q\) for periodic functions. His primary objective was to enlarge the class of almost periodic functions while preserving essential Hilbert and Banach space properties, most notably the validity of the Riesz--Fischer theorem in the case \(q=2\). In recent years, Besicovitch spaces have played an important role in investigating the distributional properties of arithmetic error terms. In particular, establishing \(\mathcal{B}^2\)- or, more generally, 
\(\mathcal{B}^q\)-almost periodicity provides a powerful tool for proving the existence of limit distributions and obtaining higher-moment information. Since the present work is concerned with the stronger notion of \(\mathcal{B}^4\)-almost periodicity, we recall below the definitions of \(\mathcal{B}^q\)-almost periodic functions and the Besicovitch space \(B^q\), along with a special kind of space which we denote by $H$-space.

\begin{defi}
 Let $1 \leq q < \infty$. We say that a measurable function $f: [1, \infty) \rightarrow \mathbb{C}$ is $\mathcal{B}^q$-almost periodic if, for every $\varepsilon > 0$,  there exists a trigonometric polynomial
 \begin{align}\label{tri_poly}
     p_J(t) = \sum_{j = 1}^J c_je^{2 \pi i \alpha_jt}
 \end{align}
having $c_j \in \mathbb{C}$ and $\alpha_j \in \mathbb{R}$ such that
\begin{align}\label{def_Bq}
    ||f - p_J||_q := \left( \limsup_{X \rightarrow \infty} \dfrac{1}{X}\int_1^{X}|f(t) - p_J(t)|^q dt\right)^{1/q} < \varepsilon.
\end{align}   
For $q\geq 1$, the collection of all $\mathcal{B}^q$-almost periodic functions is called the Besicovitch space $B^q$ of almost periodic functions (see \cite{Besicovitch}). In fact, for $f(t)\in B^q$ we define
\begin{align}
    ||f(t)||_{q}=\left( \lim_{X \rightarrow \infty} \dfrac{1}{X}\int_1^{X}|f(t)|^q dt\right)^{1/q},
\end{align}
which is only a semi-norm but not a norm, since $||f(t)||_{q}=0$ does not imply that $f(t)\equiv 0$.
\end{defi}

\begin{defi}[Space H]
    A function $f(t)$ lies in space $H$ if there exists a trigonometric polynomial  $p_J(t)$ defined in \eqref{tri_poly} such that
    $$
    \lim_{J\rightarrow\infty}\limsup_{X \rightarrow \infty} \left( \dfrac{1}{X} \int_1^X \min\{1, |f(t) - p_J(t)| \} dt\right) =0.
    $$
\end{defi}
\begin{rem}\label{remark1}
    By the above definition, we may easily observe that $B^1\subset H$. For $1 \leq q < p$, we have $B^p \subset B^q$ and $||f(t)||_{p} \geq ||f(t)||_{q}$.
Hence, $B^q\subset H$ for all $q\geq 1$.
    \end{rem}

Following Besicovitch's work, F\o lner \cite{EFOL1946, EFOL1957} extended the theory of almost periodic functions to arbitrary infinite groups. The significance of almost periodicity lies in its close connection with general trigonometric series through Fourier expansions. Furthermore, \(\mathcal{B}^q\)-almost periodic functions admit a limit probability distribution, a concept that has found important applications in analytic number theory and can be traced back to the work of Heath-Brown \cite{Brown}. The notion of a limit probability distribution is defined as follows.

\begin{defi}[Limit probability distribution]\label{probabilitydistribution}
Let $\nu_{\Re}$ and $\nu_{\Im}$ be probability measures on the Borel sets of $\mathbb{R}$. We say that a function $f: \mathbb{R}_{\geq 0} \rightarrow \mathbb{C}$ has a distribution $\nu=\nu_{\Re}+i\nu_{\Im}$ if
$$
\lim_{X \rightarrow \infty} \left(\frac{1}{X}\int_0^X g(\Re(f(t)))dt\right)=
\int_{\mathbb{R}} g(t)d\nu_{\Re}(t),
$$
and the analogous statement holds for $\Im(f(t))$ with the probability measure $\nu_{\Im}$, for every bounded continuous complex-valued function $g$ on $\mathbb{R}$.
\end{defi}

Let $\alpha=(\alpha_1,\alpha_2)\in[0,1)^2$ be fixed, and define
$$N_{\alpha}(R)=\#\{n\in\mathbb{Z}^2\big||n-\alpha|\leq R\},$$
which counts the lattice points inside the disk of radius $R$ centered at $\alpha$. The problem of estimating the error term $N_{\alpha}(R)-\pi R^2$ is known as the
\enquote{shifted circle problem}. When $\alpha=0$, it reduces to the classical
Gauss circle problem. In 1916, Hardy~\cite{HAR1916} conjectured that for every
$\varepsilon>0$,
\begin{align}\label{circleproblem}
|N_0(R)-\pi R^2|=O\left(R^{\frac 12+\varepsilon}\right).
\end{align}
This conjecture remains open to date. Recently, Li and Yang \cite{LY2023} obtained the best result in this direction, claiming that $$|N_0(R)-\pi R^2|\ll_{\varepsilon} R^{\theta+\varepsilon},$$ where $\theta=0.628966\cdots$, improving Huxley's bound~\cite{Huxley2003} for the first time in over twenty years.

Define the suitably normalized error term
\begin{align}\label{generalerrorterm}
    F_{\alpha}(R)=\frac{N_{\alpha}(R)-\pi R^2}{\sqrt{R}},
\end{align}
assuming that $R$ is uniformly distributed on $[0,T]$ for some $T>0$, $T\rightarrow \infty$. Heath-Brown \cite{Brown} examined this error term for $\alpha=0$, proving that $F_0(R)$ belongs to the Besicovitch space $B^2$, and hence to the space $H$. He further showed that every function in $H$ whose Fourier coefficients satisfy certain additional conditions admits a limit probability distribution, implying that the same holds for $F_0(R)$. He also proved that the corresponding distribution $\nu_0(dx)$ is absolutely continuous with respect to the Lebesgue measure, with density $p_0(x)$ given by an entire function that decays faster than any polynomial as $|x|\to\infty$. Bleher, Cheng, Dyson, and Lebowitz \cite{BCDL} extended Heath-Brown's method to establish the existence of such a distribution for $F_\alpha(R)$ for all $\alpha\in[0,1)^2$. They further proved that it is absolutely continuous and that its density decays at infinity like $\exp(-\mathrm{const\cdot}\,|x|^4)$.

Bleher \cite{BL1992} in 1992 initiated the study of the normalized error term 
$$F_{\gamma}(R;\alpha)=\frac{N_{\gamma}(R;\alpha)-\text{Area}\hspace{1mm}\Omega_{\gamma}(R)}{\sqrt{R}},$$
where $N_{\gamma}(R;\alpha)=\big|\Omega_{\gamma}(R)\cap(\alpha+\mathbb{Z}^2)\big|$ for a fixed $\alpha\in\mathbb{R}^2$, and $\Omega_{\gamma}(R)$ is the domain enclosed by a simple closed smooth curve $\gamma$ in $\mathbb{R}^2$, dilated by a factor of $R$. Note that $F_{\gamma}(R;\alpha)$ is a generalization of \eqref{generalerrorterm} to convex ovals. He showed that $F_{\gamma}(R;\alpha)$ lies in the $H$-space via $\mathcal{B}^2$-almost periodicity. His breakthrough result established the existence of a limit probability distribution for every function in the $H$-space under the original hypotheses alone, thereby eliminating the need for the additional assumptions required in Heath-Brown's earlier work. As a result, Bleher showed the existence of distribution of $F_{\gamma}(R,\alpha)$ on the half-line $R>0$. He also established an explicit formula for the variance of this distribution, for almost all $\alpha$ including the origin (see \cite[Equation (5.2)]{BL1992}). In the following theorem, we state Bleher's result in its precise form.

\begin{thm}\cite[Theorem 4.1]{BL1992}\label{Blehertheorem} 
   Every $f(t) \in H$ has a limit distribution $\nu$ as defined in Definition \ref{probabilitydistribution}.
\end{thm}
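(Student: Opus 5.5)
The plan is to build the limit distribution of $f$ from the limit distributions of the approximating trigonometric polynomials $p_J$, and then pass to the limit $J\to\infty$ using the $H$-condition. Since the real and imaginary parts can be treated separately ($\Re p_J$ is again a trigonometric polynomial, $\min\{1,|\Re f-\Re p_J|\}\le \min\{1,|f-p_J|\}$, and similarly for the imaginary part), it suffices to take $f$ real-valued and $p_J$ real trigonometric polynomials.

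\textbf{Step 1 (trigonometric polynomials).} For a fixed $p_J(t)=\sum_{j\le J} c_j e^{2\pi i\alpha_j t}$, choose a $\mathbb{Z}$-basis $\beta_1,\dots,\beta_r$ of the $\mathbb{Q}$-span of the frequencies, after rescaling so that each $\alpha_j$ is an integer combination of the $\beta_k$. Then $p_J(t)=P(\beta_1 t,\dots,\beta_r t)$ for a continuous function $P$ on the torus $\mathbb{T}^r$. By Kronecker--Weyl equidistribution of the linear flow $t\mapsto(\beta_1t,\dots,\beta_rt)$ on $\mathbb{T}^r$,
\[
\frac1X\int_0^X g(p_J(t))\,dt\to\int_{\mathbb{T}^r} g(P(\theta))\,d\theta
\]
for every bounded continuous $g$. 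So $p_J$ has a limit distribution $\nu_J$, the push-forward of Haar measure under $P$.

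\textbf{Step 2 (Cauchy property).} Let $\varepsilon_J:=\limsup_X \frac1X\int_1^X\min\{1,|f-p_J|\}\,dt\to 0$. Take $g$ bounded and Lipschitz, with Lipschitz constant $L$ and $\sup|g|\le M$. Then
\[
|g(a)-g(b)|\le \max(L,2M)\,\min\{1,|a-b|\},
\]
so
\[
\limsup_X\Big|\frac1X\int_0^X g(f)-\frac1X\int_0^X g(p_J)\Big|\le C_g\,\varepsilon_J .
\]
The interval $[0,1]$ contributes $O(1/X)$, assuming $f$ is locally integrable, or simply starting from $1$. Hence $\int g\,d\nu_J$ is Cauchy in $J$, and the upper and lower limits of $\frac1X\int_0^X g(f)$ agree with $\lim_J\int g\,d\nu_J$.

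\textbf{Step 3 (tightness and identification).} This is the main obstacle: showing that the $\nu_J$ do not lose mass to infinity, so that the linear functional $\Lambda(g)=\lim_J\int g\,d\nu_J$ is given by a probability measure. To get tightness, fix $\eta>0$ and choose $J_0$ with $\varepsilon_{J_0}<\eta$. Choose $K$ with $\sup|p_{J_0}|\le K$, which is possible since trigonometric polynomials are bounded. Let $h$ be a continuous cutoff equal to $0$ on $[-K-1,K+1]$ and $1$ outside $[-K-2,K+2]$. Then, up to constants,
\[
\limsup_X\frac1X\int h(f)\le\limsup_X\frac1X\,\big|\{t\le X:|f-p_{J_0}|\ge1\}\big|\le\eta .
\]
By Step 2, $\int h\,d\nu_J\le 2\eta$ for all large $J$. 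So $\{\nu_J\}$ is tight, and by Prokhorov a subsequence converges weakly to a probability measure $\nu$. Since $\int g\,d\nu_J$ converges for all bounded Lipschitz $g$, which form a convergence-determining class, the whole sequence converges to $\nu$. Moreover $\frac1X\int_0^X g(f)\,dt\to\int g\,d\nu$ for bounded Lipschitz $g$.

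\textbf{Step 4 (extension to all bounded continuous $g$).} Extend this from bounded Lipschitz $g$ to all bounded continuous $g$ by the portmanteau argument. Tightness lets us restrict to a compact interval $[-A,A]$ up to an error $\eta$, on the side of both $f$ (via the bound in Step 3) and $\nu$. On $[-A,A]$, approximate $g$ uniformly by Lipschitz functions. This gives the limit in Definition \ref{probabilitydistribution} for both the real and imaginary parts, proving the theorem.
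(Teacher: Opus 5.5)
The paper gives no proof of this statement. It quotes Bleher \cite[Theorem 4.1]{BL1992} and uses it only as a black box, via Remark~\ref{remark1}, to pass from $P\in B^4$ to $P\in H$ and hence to a limit distribution. There is therefore no in-paper argument to compare with.

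Your proposal is correct and gives a self-contained proof of that black box along the natural lines:
\begin{itemize}
\item Kronecker--Weyl equidistribution yields the distributions $\nu_J$ of the approximants.
\item The inequality $|g(a)-g(b)|\le \max(L,2M)\min\{1,|a-b|\}$ turns the defining condition of $H$ directly into convergence of $\frac1X\int g(f)$ for bounded Lipschitz $g$.
\item The same condition, applied to one fixed $p_{J_0}$, gives tightness on both the $f$ side and the $\nu_J$ side, so the limit functional is a genuine probability measure.
\end{itemize}
What your route buys is an explicit check that nothing beyond the $\min\{1,\cdot\}$ approximation enters: no moments and no hypotheses on frequencies or Fourier coefficients. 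This is exactly the generality in which the paper invokes the result.

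A few details should be tidied.
\begin{itemize}
\item In Step~1, ``a $\mathbb{Z}$-basis of the $\mathbb{Q}$-span'' should be a $\mathbb{Z}$-basis of the finitely generated torsion-free, hence free, group $\sum_j \mathbb{Z}\alpha_j$. Kronecker--Weyl needs $\beta_1,\dots,\beta_r$ to be $\mathbb{Q}$-linearly independent, and such a basis is.
\item The contribution of $[0,1]$ is at most $\sup|g|/X$ with no integrability assumption on $f$, since $g$ is bounded. Only measurability of $f$ is used.
\item Your tail bound in Step~3 holds only for $J\ge J_1(\eta)$. The finitely many remaining $\nu_J$ are compactly supported, being distributions of bounded functions, so the whole family is still tight.
\item Step~4 can be shortened to a citation of the portmanteau theorem. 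The empirical measures $B\mapsto \frac1X|\{t\le X: f(t)\in B\}|$ and $\nu$ are probability measures, so convergence against all bounded Lipschitz test functions already gives convergence against all bounded continuous ones. Complex-valued $g$ are handled by splitting into real and imaginary parts. Your truncation argument is a correct direct proof of this.
\end{itemize}
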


In conjunction with Remark \ref{remark1}, this proves that every almost periodic function in the Besicovitch space $B^q$ for $q \geq 1$, admits a limit probability distribution. We will use Theorem~\ref{Blehertheorem} at the end of Section~\ref{mainresults} to show that the error term associated with a certain class of arithmetical functions admits a limit probability distribution.

Gath applied almost periodicity to prove limit probability distributions in a non-commutative setting. For the Heisenberg lattice point problem introduced by Garg, Nevo, and Taylor~\cite{RNT}, Gath~\cite{Gath0,Gath2,Gath1,Gath3} obtained the best-known error estimates in dimensions $3$ and $2q+1$ ($q>1$) and established limit probability distributions for the normalized error terms using their $\mathcal{B}^2$-almost periodic nature.

  Another consequence of almost periodicity is the existence of moments. If $f\in B^q$, then for all $1\leq \kappa\leq q$,
\begin{align}
    \lim_{X\rightarrow \infty} \frac{1}{X}\int_0^X |f(x)|^{\kappa}dx
\end{align}
exists. In 1999, Peter \cite{Peter} examined the truncated formula for the remainder in an asymptotic formula connected with a Dirichlet $L$-series to deduce that it is $\mathcal{B}^2$-almost periodic. Using this, he further deduced the existence of distribution and second power moment of the error term. Šleževičienė and Steuding \cite{Steuding&S} considered the twisted circle problem
\begin{align}
    \sum_{n\leq x}r(n)\exp\left(\frac{2\pi ink}{4l}\right), \ \ \ \ r(n)=\#\{(a,b)\in\mathbb{Z}^2|a^2+b^2=n\},
\end{align}
where $x>0$, $k,l$ are positive integers, $\gcd(k,4l)=1$. They derived its  truncated Vorono\"{i} formula and used it to show that the error term is $\mathcal{B}^2$-almost periodic and admits a limit probability distribution. Additionally, they also estimated the asymptotic mean-square formula and omega bounds for this sum.

\section{The Chandrasekharan--Narasimhan class of arithmetical functions}\label{theairfun}
In this paper, we work with a class of Dirichlet series satisfying a functional equation involving gamma factors. Let $\{\lambda_n\}$ and $\{\mu_n\}$ be two sequences of real numbers satisfying
\begin{align}
0<\lambda_1<\lambda_2<\cdots<\lambda_n\to\infty,\\
0<\mu_1<\mu_2<\cdots<\mu_n\to\infty.
\end{align}
Let $\{a(n)\}$ and $\{b(n)\}$ be two sequences of complex numbers, not
identically zero. For a complex variable $s=\sigma+it$, where $\sigma, t \in \mathbb{R}$, consider the Dirichlet series
\begin{align}\label{phishi}
\varphi(s)=\sum_{n=1}^{\infty}\frac{a(n)}{\lambda_n^{\,s}}
\quad \text{and} \quad
\psi(s)=\sum_{n=1}^{\infty}\frac{b(n)}{\mu_n^{\,s}}.    
\end{align}
Assume that both series converge in some right half–plane, and denote by
$\sigma_a$ and $\sigma_a^*$ their respective abscissae of absolute
convergence. Let $\Delta(s)$ be one of the gamma factors
\begin{align}
\Gamma(s), 
\qquad 
\Gamma\!\left(\tfrac12 s\right)\Gamma\!\left(\tfrac12(s-p)\right), 
\quad \text{or} \ \ \
\Gamma^2\!\left(\tfrac12(s+1)\right),\label{gammafactors}
\end{align}
where $p$ is a fixed integer. In these three cases, we take
$r$ to be an arbitrary real number, $p+1$, and $1$, respectively.
We say that $\varphi$ and $\psi$ satisfy the functional equation
\begin{align}\label{fnal}
\Delta(s)\varphi(s)=\Delta(r-s)\psi(r-s),
\end{align}
if there exists a meromorphic function $\chi(s)$ with the following properties:
\begin{enumerate}
\item[(i)] $\chi(s)=\Delta(s)\varphi(s)$ for $\sigma>\sigma_\varphi$, and
$\chi(s)=\Delta(r-s)\psi(r-s)$ for $\sigma<r-\sigma_\psi$;

\item[(ii)] $\displaystyle \lim_{|t|\to\infty}\chi(\sigma+it)=0$
uniformly for $\sigma$ in every finite vertical strip
$\sigma_1\le\sigma\le\sigma_2$;

\item[(iii)] all poles of\/ $\chi(s)$ lie in a compact subset of\/ $\mathbb{C}$.
\end{enumerate}
 This class of Dirichlet series was introduced by Chandrasekharan and Narasimhan \cite{CN0,CN2}, who obtained explicit representations of the associated error terms in terms of Bessel functions corresponding to the gamma factors listed in \eqref{gammafactors}. In case $\Delta(s)=\Gamma(s)$, Equation~\eqref{fnal} reduces to Hecke's functional equation.

Let $q\in\mathbb{Z}_{\geq 0}$ and $x>0$. Define
\[
A_q(x)=\frac{1}{\Gamma(q+1)}\sideset{}{'}\sum_{\lambda_n\leq x}a(n)(x-\lambda_n)^q,
\]
and
\[
Q_q(x)=\frac{1}{2\pi i}\int_{C_q}\frac{\Gamma(s)\varphi(s)}{\Gamma(s+q+1)}x^{s+q}\,ds,
\]
where $C_q$ is a contour enclosing all the poles of the integrand. Set
\begin{align}
D_q(x)=A_q(x)-Q_q(x).
\end{align}
Assume that for $\sigma>\sigma_a^*$,
\[
\sup_{0\leq h\leq 1}\left|\sum_{m^2\leq \mu_n\leq (m+h)^2}b(n)\mu_n^{\frac12-\sigma}\right|
=o(1)\qquad\text{as }m\to\infty.
\]
Then, Chandrasekharan and Narasimhan \cite{CN0,CN2} proved that for $q>2\sigma_a^*-r-\frac32$,
\begin{align}
D_q(x)=\sum\frac{b(n)}{\mu_n^{r+q}}I_q(\mu_nx),\label{D_qsum}
\end{align}
where $I_q$ is given, in the three cases of \eqref{gammafactors}, respectively by
\begin{align}
I_q(x)&=x^{\frac{r+q}{2}}J_{r+q}(2\sqrt{x}),\\
&\hspace{0.5cm} \text{or}\\
&=x^{\frac{p+q+1}{2}}2^{-q}\left\{\cos\left(\frac{(p+1)\pi}{2}\right)J_{p+q+1}(4\sqrt{x})\right.\\
&\left.\hspace{2.3cm}-\sin\left(\frac{(p+1)\pi}{2}\right)\left[Y_{p+q+1}(4\sqrt{x})+\frac{2(-1)^{p+q}}{\pi}K_{p+q+1}(4\sqrt{x})\right]\right\},\\
&\hspace{0.5cm}\text{or}\\
&=x^{\frac{q+1}{2}}2^{-q}\left\{Y_{q+1}(4\sqrt{x})+\frac{2(-1)^{q+1}}{\pi}K_{q+1}(4\sqrt{x})\right\}.
\end{align}
The series of Bessel functions in \eqref{D_qsum} converges uniformly on every interval in $(0,\infty)$ on which $D_q(x)$ is continuous. When $q=0$, it converges boundedly on every compact interval in $(0,\infty)$. We also refer the reader to the series of papers by Berndt \cite{Berndt12,Berndt3,Berndt4,Berndt56,Berndt7}, where various properties of the associated arithmetical functions are studied in detail.

In this paper, we establish the $\mathcal{B}^4$-almost periodicity for the error term associated with this class of arithmetical functions, satisfying some additional constraints. More specifically, we work with the gamma factors $\Gamma(s)$, $\Gamma^2\left(\frac 12(s+1)\right)$ and $\Gamma\!\left(\tfrac12 s\right)\Gamma\!\left(\tfrac12(s-p)\right)$ for $p=0$ or $p$ being a positive odd integer only. For this purpose, we first derive the truncated Vorono\"{i} formulas for the error term in each of the three cases separately, corresponding to the choice of gamma factor.

\subsection{Notations and Conventions} 
Throughout this paper, we assume that $\varphi=\psi$, so that $a(n)=b(n)$ and $\lambda_n=\mu_n=C_{\varphi}n$ for all $n\in\mathbb{N}$, where $C_{\varphi}$ is a positive constant depending on the Dirichlet series $\varphi$. We further assume that
$$
b(n)\ll n^{\varepsilon}.
$$
Let $\sigma_a$ denote the abscissa of absolute convergence of $\varphi$, and let $r$ be as defined in \eqref{fnal}. We  define
\begin{align}\label{defidelta_0}
\delta_0=2\sigma_a-r-\frac12.
\end{align}
Throughout the paper, we assume that
\begin{align}\label{defidelta_0con}
0\leq \delta_0\leq \frac12.
\end{align}
We also define the summatory function
\begin{align}
A_{0}(x)=\sideset{}{'}\sum_{n\leq x} b(n), \label{sum0}
\end{align}
where the prime indicates that, if $x\in\mathbb{N}$, the term $b(x)$ is counted with weight $\tfrac12$. 

Unless otherwise specified, $\varepsilon>0$ denotes an arbitrarily small positive constant, not necessarily the same at each occurrence. For any real number $l$, the notation $\int_{(l)}$ denotes the line integral $\int_{l-i\infty}^{l+i\infty}$. Finally, $\lfloor\cdot\rfloor$ denotes the greatest integer function.

The paper is organized as follows. In Section \ref{mainresults}, we describe the main results of the paper. In Section~\ref{prelimm}, we recall several preliminary results that serve as the key ingredients for proving our main results. Section~\ref{Vorr} is devoted to the proofs of Theorems \ref{truncatedformula}, \ref{truncatedformula2}, and~\ref{truncatedformula3}. In Section \ref{mainproof}, we use these results to prove Theorem~\ref{th1} and Corollary \ref{cor:fourth_moment}. Finally, the concluding section presents illustrative examples of arithmetical functions that belong to the class of Dirichlet series under consideration.

\section{Main Results}\label{mainresults}
Theorems \ref{truncatedformula}--\ref{truncatedformula3} provide truncated Voronoï formulas for the error term associated with the summatory function $A_0(x)$ defined in \eqref{sum0}, corresponding to the three distinct choices of $\Delta(s)$ and $r$ in \eqref{fnal}. Throughout, let $x \geq 1$, and let $D_{0}(x)$ denote the error term in the asymptotic formula for $A_0(x)$. We now state our main results.

\begin{thm}\label{truncatedformula}
Let $\varphi$ be as defined in \eqref{phishi} 
 satisfying the functional equation \eqref{fnal} with $\Delta(s) = \Gamma(s)$ and $r$ being any fixed positive real.
 Then, for $N\in\mathbb{N}$  we have
    \begin{align}
        D_{0}(x)=\frac{1}{\sqrt{\pi}}C_{\varphi}^{-\frac{r}{2}-\frac14} &x^{\frac{r}{2}-\frac14}\sum_{n=1}^{N}\frac{b(n)}{ n^{\frac{r}{2}+\frac14}}  \cos\left(
2\sqrt{C_{\varphi}nx}-\frac{r \pi}{2}-\frac{\pi}{4}
\right)\\
&~+\begin{cases}
        O\left(x^{\frac{\delta_0}{2}+\frac{r}{2}-\frac14+\varepsilon} N^{-\frac12}  +  x^{-\frac{\delta_0}{2}+\frac r2-\frac 14-\varepsilon}\right) &\text{if }\ \delta_0<\frac 12\\
        O\left(x^{\frac{\delta_0}{2}+\frac{r}{2}-\frac14+\varepsilon} N^{-\frac12}  +  x^{-\frac{\delta_0}{2}+\frac r2-\frac 14+\varepsilon}N^{\varepsilon}\right) &\text{if }\ \delta_0=\frac 12.
    \end{cases}\label{truncated1}
    \end{align}
\end{thm}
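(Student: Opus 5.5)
The plan is to follow the classical truncated Perron route (in the style of Landau, and of the Chandrasekharan--Narasimhan arguments that lead to \eqref{D_qsum}). I would not work with the $q=0$ Bessel series directly, since it converges only boundedly.

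\textbf{Step 1: Perron's formula.} Take $T$ large, to be chosen in terms of $N$ and $x$ later, and let $c=\sigma_a+\varepsilon$. Truncated Perron gives
\[
A_0(x)=\frac{1}{2\pi i}\int_{c-iT}^{c+iT}\varphi(s)\frac{x^s}{s}\,ds+O\Big(\frac{x^{c}}{T}\sum_n \frac{|b(n)|}{(C_\varphi n)^{c}|\log(x/C_\varphi n)|}\Big).
\]
Using $b(n)\ll n^\varepsilon$, the error is $O(x^{\sigma_a+\varepsilon}/T + x^{\varepsilon})$.

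\textbf{Step 2: Shift the contour.} Move the line of integration to $\Re s=r-c$. The residues of the poles of $\varphi(s)x^s/s$, which lie in a compact set by (iii), produce exactly $Q_0(x)$. The horizontal segments are bounded using the convexity (Phragm\'en--Lindel\"of) bound for $\varphi$ in the strip. This bound follows from (ii) together with Stirling's formula, since $\varphi(s)=\Gamma(r-s)\psi(r-s)/\Gamma(s)$ grows like $|t|^{r-2\sigma}$ on the left line. On the left line, apply the functional equation \eqref{fnal} with $\psi=\varphi$ and expand $\psi(r-s)$ as an absolutely convergent series. This gives
\[
D_0(x)=\sum_{n}\frac{b(n)}{(C_\varphi n)^{r}}\,\frac{1}{2\pi i}\int_{r-c-iT}^{r-c+iT}\frac{\Gamma(r-s)}{\Gamma(s)}\,\frac{(C_\varphi n x)^{s}}{s}\,ds + \text{(errors)}.
\]

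\textbf{Step 3: Evaluate the inner integrals.} Replace $\Gamma(r-s)/\Gamma(s)$ by its Stirling expansion. For $n$ with $C_\varphi n x\le T^2/(4\pi^2)$ roughly, the inner integral has a saddle point at $|t|\asymp 2\sqrt{C_\varphi n x}$. A stationary phase evaluation, in the style of the first derivative test and saddle point lemmas found in the preliminaries (Titchmarsh's Lemma 4.5 or an Atkinson-type lemma), produces the main term
\[
\frac{1}{\sqrt\pi}(C_\varphi n)^{\frac r2-\frac14}x^{\frac r2-\frac14}\cos\Big(2\sqrt{C_\varphi n x}-\frac{r\pi}{2}-\frac\pi4\Big).
\]
After dividing by $(C_\varphi n)^r$, this is exactly the summand in \eqref{truncated1}. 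For larger $n$, where there is no saddle point, the first derivative test bounds the integral.

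\textbf{Step 4: Choose $T$ and sum the errors.} Choose $T\asymp\sqrt{Nx}$, say $T^2=4\pi^2 C_\varphi (N+\frac12)x$, so that the saddle-point terms correspond precisely to $n\le N$. Summing the tails with $b(n)\ll n^\varepsilon$ yields $x^{\sigma_a-\frac12+\varepsilon}N^{-1/2}$. Since $\sigma_a-\frac12=\frac{\delta_0}{2}+\frac r2-\frac14$ by \eqref{defidelta_0}, this is the first error term. The Perron error $x^{\sigma_a+\varepsilon}/T$ has the same size.

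\textbf{Main obstacle.} The hardest part will be the remaining error $x^{-\delta_0/2+r/2-1/4\mp\varepsilon}$, which comes from the lower-order Stirling terms and the horizontal segments. These are controlled by $\sum_n |b(n)|n^{-\sigma_a-\frac12-\ldots}$, whose convergence requires exactly $\delta_0<\frac12$. At $\delta_0=\frac12$ the sum is only logarithmically or $n^{\varepsilon}$ divergent up to $N$, which produces the extra factor $N^{\varepsilon}$ (and $x^{+\varepsilon}$). I would handle this step by carefully tracking the second term of the asymptotic expansion, and I would also treat separately the terms with $n$ close to $N$, where the saddle point approaches the endpoint $T$, using the endpoint version of the saddle-point lemma.
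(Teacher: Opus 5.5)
Your skeleton is the paper's: truncated Perron with $T\asymp\sqrt{Nx}$, a shift to $\Re s=r-c$, the functional equation, termwise integrals, and the first-derivative test for $n>N$. Your Step~3, however, takes a genuinely different route.

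\textbf{How the two routes differ.} The paper never does a saddle-point analysis. For $n\le N$ it moves the segment slightly right, to $\Re w=-a_\varepsilon+2\varepsilon$, so that the amplitude becomes $|t|^{\delta_0-\frac12-2\varepsilon}$ and the integral over the whole line converges (conditionally). It then completes the integral and bounds the two tails $|t|>T$ by the first-derivative test (Lemma~\ref{expintegrallemma}). It evaluates the complete integral exactly by the Mellin--Barnes identity $\frac{1}{2\pi i}\int\frac{\Gamma(r-w)}{\Gamma(w+1)}X^{w}\,dw=X^{r/2}J_r(2\sqrt X)$, and only at the end uses the asymptotic for $J_r$.

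Thus in the paper all stationary-phase content is hidden in the classical Bessel asymptotic, and the first-derivative test is the only oscillatory tool needed. It also yields the intermediate formula \eqref{truncattedwithJ} with exact Bessel terms. The price is a separate exact identity for each gamma factor (\eqref{I1int}, \eqref{I2int} in the $Y$/$K$ cases).

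Your route is uniform in $\Delta(s)$: only the Stirling phase matters, and the non-oscillatory $K$-parts never appear. But it needs a genuine asymptotic stationary-phase lemma with a good error term and endpoint terms of the form $g(T)\bigl(|F'(T)|+|F''(T)|^{1/2}\bigr)^{-1}$, as in Atkinson's lemma. Titchmarsh's Lemma~4.5 is only an upper bound, and the paper's preliminaries contain only the first-derivative test.

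That endpoint form is what you need for $n$ close to $N$, and your remark there is well taken. The paper's tail estimates simply absorb the factor $1/\log\frac{N+1/2}{n}$ into $O(1)$, although it is of size $N$ when $n=N$.

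\textbf{Corrections needed.}

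\emph{Saddle point and choice of $T$.} In this normalization there is no $2\pi$. On $\Re s=r-c$ the phase of $\frac{\Gamma(r-s)}{\Gamma(s)}X^{s}$ is $t\log X-2t\log t+2t+\mathrm{const}$. So the saddle point is at $t_0=\sqrt{C_\varphi nx}$, with phase value $2\sqrt{C_\varphi nx}$, and you must take $T^2=C_\varphi(N+\frac12)x$ as in \eqref{Tdefn}. With your $T^2=4\pi^2C_\varphi(N+\frac12)x$ the saddle-point terms run over $n\le4\pi^2(N+\frac12)$, not $n\le N$.

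\emph{Source of the $\delta_0=\frac12$ dichotomy.} It does not come from an $n$-sum. Every $n$-sum in the error terms is essentially $\sum_n|b(n)|n^{-c}$ with $c=\sigma_a+\varepsilon$, which converges for every $\delta_0$. It comes from the $t$-integrals. On $\Re s=r-c$ the amplitude is $|t|^{\delta_0-\frac12+2\varepsilon}$, so the first-derivative bounds carry $\max_{t\le T}t^{\delta_0-\frac12+2\varepsilon}$, and the Stirling remainder contributes $\int_1^T t^{\delta_0-\frac32+2\varepsilon}\,dt$. Both are $O(1)$ for $\delta_0<\frac12$ and $O(T^{2\varepsilon})$ for $\delta_0=\frac12$. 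Multiplying by $x^{r-c}=x^{-\frac{\delta_0}{2}+\frac r2-\frac14-\varepsilon}$ gives exactly the second error term.

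\emph{The dropped Perron term.} The $O(x^{\varepsilon})$ term from Step~1 disappears without comment in Step~4. It is absorbed only if $r>\delta_0+\frac12$, or $r\ge1$ when $\delta_0=\frac12$. It cannot be removed for general $x$, because $D_0$ jumps by $b(n)$ while the truncated sum is continuous. The paper's Lemma~\ref{Perron} avoids this only because it is stated for half-integral $x$.

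\emph{The Perron error bound.} $x^{\sigma_a+\varepsilon}/T$ bounds the Perron error only when $\sigma_a\ge1$. Otherwise the terms with $C_\varphi n$ near $x$ contribute $x^{1+\varepsilon}/T$.

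Neither of the last two points is specific to your route, since the paper has the same issues. Still, you should not drop them silently.
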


\begin{thm}\label{truncatedformula2}
Let $\varphi$ be as defined in \eqref{phishi} satisfying the functional equation \eqref{fnal} with $\Delta(s) = \Gamma(\frac{s}{2})\Gamma\left(\frac{s-p}{2}\right)$ and $r=p+1$. Then, for $N\in\mathbb{N}$, we have
   \begin{enumerate}
       \item \label{truncated2.1} if $p=0$ and $\sigma_a=1$ (or equivalently, $\delta_0=\frac 12$), we have
       \begin{align}
        D_{0}(x)=\frac{1}{\sqrt{2\pi}}C_{\varphi}^{-\frac34} x^{\frac14}&\sum_{n=1}^{N}\frac{b(n)}{ n^{\frac34}}  \cos\left(
4\sqrt{C_{\varphi}nx}-\frac{\pi}{4}
\right)+
O\left(x^{\frac 12+\varepsilon} N^{-\frac12}  +  x^{\varepsilon}N^{\varepsilon}\right).
    \end{align}
    \item \label{truncated2.2} if $p$ is a positive odd integer, we have
 \begin{align}
        D_{0}(x)=\frac{1}{\sqrt{2\pi}}&C_{\varphi}^{-\frac{p}{2}-\frac34}x^{\frac{p}{2}+\frac14}\sum_{n=1}^{N}\frac{b(n)}{ n^{\frac{p}{2}+\frac34}}  \cos\left(
4\sqrt{C_{\varphi}nx}-\frac{\pi}{4}
\right)\\
&\quad +
\begin{cases}
    O\left(x^{\frac{\delta_0}{2}+\frac{p}{2}+\frac14+\varepsilon} N^{-\frac12}  +  x^{-\frac{\delta_0}{2}+\frac p2+\frac 14-\varepsilon}\right) &\text{if }\ \delta_0<\frac 12\\
O\left(x^{\frac{\delta_0}{2}+\frac{p}{2}+\frac14+\varepsilon} N^{-\frac12}  +  x^{-\frac{\delta_0}{2}+\frac p2+\frac 14+\varepsilon}N^{\varepsilon}\right) &\text{if }\ \delta_0=\frac 12.\label{truncated2}
\end{cases}
    \end{align}

   \end{enumerate}
\end{thm}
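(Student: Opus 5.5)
We would prove Theorem~\ref{truncatedformula2} by the standard Perron-formula route to a truncated Vorono\"{i} formula, as in the proof of Theorem~\ref{truncatedformula}, now with the gamma factor $\Delta(s)=\Gamma(\frac s2)\Gamma(\frac{s-p}{2})$ and $r=p+1$.

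\textbf{Step 1: Perron's formula.} With $b=\sigma_a+\varepsilon$ and a parameter $T$, write
\[
A_0(x)=\frac{1}{2\pi i}\int_{b-iT}^{b+iT}\varphi(s)\frac{(C_\varphi x)^{s}}{s}\,ds+O\Big(\frac{x^{\sigma_a+\varepsilon}}{T}+x^{\varepsilon}\Big),
\]
using $b(n)\ll n^\varepsilon$. The $x^\varepsilon$ term comes from $n$ close to $x$; it is the source of the $x^{\varepsilon}N^{\varepsilon}$ term when $\delta_0=\frac12$.

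\textbf{Step 2: shift the contour.} Move the line of integration to $\Re s=a:=r-\sigma_a-\varepsilon$. The residues at the poles of $\chi(s)/\Delta(s)\cdot x^s/s$ (finitely many, by property (iii)) give $Q_0(x)$. The horizontal segments are bounded using Phragm\'en--Lindel\"of convexity between $\varphi$ on $\Re s=b$ and $\psi$ on $\Re s=a$.

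\textbf{Step 3: apply the functional equation.} On $\Re s=a$, write
\[
\varphi(s)=\frac{\Delta(r-s)}{\Delta(s)}\sum_n b(n)(C_\varphi n)^{s-r}.
\]
Stirling's formula for the ratio $\Gamma(\frac{r-s}2)\Gamma(\frac{r-s-p}{2})/(\Gamma(\frac s2)\Gamma(\frac{s-p}2))$ gives a leading term of the shape $|t/2|^{r-2\sigma}e^{i\cdot(\text{phase})}$. The phase behaves like that of $\Gamma(r-s)/\Gamma(s)$ with $t$ replaced by $t/2$. This doubling is what produces the frequency $4\sqrt{C_\varphi nx}$ rather than $2\sqrt{\cdot}$.

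\textbf{Step 4: split the sum over $n$.} For $n\le N$, evaluate each term
\[
\frac{1}{2\pi i}\int_{a-iT}^{a+iT}\frac{\Delta(r-s)}{\Delta(s)}(C_\varphi^2 nx)^{s}\frac{ds}{s}
\]
by the saddle-point method. Equivalently, compare with the full Bessel integral from \eqref{D_qsum} and use the asymptotics of $J$, $Y$, $K$ at large argument. This gives the main term $\frac{1}{\sqrt{2\pi}}C_\varphi^{-p/2-3/4}x^{p/2+1/4}n^{-p/2-3/4}\cos(4\sqrt{C_\varphi nx}-\frac{\pi}{4})$.

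A remark on the phase: for odd $p$ the factor $\cos(\frac{(p+1)\pi}{2})=\pm1$ and $\sin(\frac{(p+1)\pi}2)=0$, so $I_0$ is a $J$-Bessel term. Its phase $-\frac{(p+1)\pi}{2}-\frac\pi4$ combined with the sign collapses to $-\frac\pi4$. For $p=0$ only the $Y$ and $K$ terms remain; $K$ is exponentially small, and $Y_1$ again gives phase $-\frac\pi4$.

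\textbf{Step 5: bound the pieces.}
\begin{itemize}
\item The error from saddle-point truncation of each term is $O(x^{p/2-1/4}n^{-p/2-5/4})$, which is summable.
\item For $n>N$, estimate by the first-derivative test. Terms with $n\gg T^2/x$ have no saddle point and contribute $\ll\sum_n n^{-\sigma_a}|\cdot|/|\log|$-type bounds. Choosing $T^2\asymp C_\varphi Nx$ makes the tail $\ll x^{\frac{\delta_0}{2}+\frac p2+\frac14+\varepsilon}N^{-1/2}$.
\item The Perron error $x^{\sigma_a+\varepsilon}/T$ becomes $x^{\sigma_a-\frac12+\varepsilon}N^{-1/2}=x^{\frac{\delta_0}{2}+\frac r2-\frac14+\varepsilon}N^{-1/2}$. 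For $r=p+1$ this equals $x^{\frac{\delta_0}{2}+\frac p2+\frac14+\varepsilon}N^{-1/2}$.
\end{itemize}

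\textbf{Step 6: the second error term.} The term $x^{-\delta_0/2+p/2+1/4\mp\varepsilon}$ comes from the integral on $\Re s=a$ restricted to $|t|\le$ a constant, together with the horizontal segments. When $\delta_0=\frac12$, the series $\sum b(n)n^{-\sigma_a}$ is only barely divergent, which produces the extra $N^\varepsilon$. In case (1), $\delta_0=\frac12$ and $r=1$ give the exponents $\frac12$ and $0$ stated there.

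The main obstacle is Step 4/5: the uniform saddle-point analysis of the truncated gamma-ratio integral. It must hold uniformly in $n$ near the transition range $n\asymp T^2/x$, and for $n\le N$ it must yield exactly the Bessel main term with an error small enough. I would handle this via the Chandrasekharan--Narasimhan exact formula \eqref{D_qsum}, truncating it smoothly rather than redoing Stirling directly.
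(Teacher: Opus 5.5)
Your route is the paper's. The paper omits this proof and defers to that of Theorem~\ref{truncatedformula}: Perron's formula at $c=\sigma_a+\varepsilon$, a shift to $\Re s=r-\sigma_a-\varepsilon$, the functional equation, a split of the Dirichlet series at $N$, and the first-derivative test (Lemma~\ref{expintegrallemma}) for $n>N$. The real divergence is in how you resolve your \emph{main obstacle}.

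The paper does no saddle-point analysis and never truncates \eqref{D_qsum}. Instead it carries out, term by term, what your Step~4 calls comparing with the full Bessel integral. For each $n\le N$ it completes the truncated integral to the full line and bounds the part $|t|>T$ by Lemma~\ref{expintegrallemma}. It then evaluates the full-line integral exactly by a Mellin--Barnes identity: \cite[Eq. (2.12)]{Berndt3} for odd $p$, where only $J_{p+1}$ survives, and \eqref{I1int}--\eqref{I2int} for $p=0$, giving $Y_1$ and $K_1$. Only after that does it apply large-argument asymptotics.

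Truncating \eqref{D_qsum} at $q=0$, smoothly or not, would instead require bounding $\sum_{n>N}b(n)n^{-\frac p2-\frac34}\cos(4\sqrt{C_\varphi nx}-\frac\pi4)$. This tail is in general not absolutely convergent, since $\frac p2+\frac34=\sigma_a-\frac{\delta_0}{2}$. The cancellation needed to bound it is exactly what the Perron argument provides, so that detour gains nothing.

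The transition range $n\approx N$ then enters only through the factors $1/|\log(n/(N+\frac12))|$ in the first-derivative bounds. Near $n=N$ these are of size $N/|n-N-\frac12|$, so they must be summed rather than bounded by $O(1)$. Summing costs a factor $\log N$, which the stated error terms absorb.

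Three further points need fixing.

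\textbf{The choice of $T$.} The phase of $\Delta(r-s)/\Delta(s)$ is $-2t\log\frac{|t|}{2}+2t+O(1)$. This is twice, not once, the phase of $\Gamma(r-s)/\Gamma(s)$ evaluated at $t/2$. Hence the $n$-th integrand $\frac{\Delta(r-s)}{\Delta(s)}(C_\varphi nx)^{s}$ has its stationary point at $|t|=2\sqrt{C_\varphi nx}$, with stationary phase $4\sqrt{C_\varphi nx}$. You must therefore take $T=2\sqrt{C_\varphi(N+\frac12)x}$, not merely $T^2\asymp C_\varphi Nx$. Otherwise terms of main-term size with $n\asymp N$ fall on the wrong side of the cut.

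\textbf{Error bookkeeping.} The tail $n>N$ does not produce the $N^{-1/2}$ term. On $\Re s=r-\sigma_a-\varepsilon$ each tail integral is only $O\bigl((nx)^{r-\sigma_a-\varepsilon}(1+1/|\log(n/(N+\frac12))|)\bigr)$, and the range $|t|\le d$ alone prevents anything better. Summing over $n>N$ gives the second term $x^{-\frac{\delta_0}{2}+\frac p2+\frac14-\varepsilon}$. The $N^{-1/2}$ term comes from the Perron truncation and the horizontal segments, via $x^{c}/T$. The extra $N^{\varepsilon}$ at $\delta_0=\frac12$ comes from a factor $T^{\varepsilon}$, because the amplitude $t^{\delta_0-\frac12+\varepsilon}$ then no longer decays.

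\textbf{Normalization.} With $(C_\varphi x)^s$ in Step~1 you get $(C_\varphi^2nx)^s$, and hence frequency $4C_\varphi\sqrt{nx}$. The stated main term corresponds to the paper's normalization $\varphi(s)x^s/s$.
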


\begin{thm}\label{truncatedformula3}
Let $\varphi$ be as defined in \eqref{phishi} with abscissa of absolute convergence $\sigma_a=1$, satisfying the functional equation \eqref{fnal} with $\Delta(s) = \Gamma^2(\frac{s + 1}{2})$  and $r=1$ (or equivalently, $\delta_0=\frac 12)$. Then, for $N\in\mathbb{N}$, we have
    \begin{align}
         D_{0}(x)=\frac{1}{\sqrt{2\pi}}C_{\varphi}^{-\frac34} x^{\frac14}\sum_{n=1}^{N}\frac{b(n)}{ n^{\frac34}} \cos&\left(
4\sqrt{C_{\varphi}nx}-\frac{\pi}{4}
\right)+O\left(x^{\frac 12+\varepsilon} N^{-\frac12}  +  x^{\varepsilon}N^{\varepsilon}\right).\label{truncated2}
    \end{align}
\end{thm}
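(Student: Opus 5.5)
We will follow the standard route to truncated Voronoï formulas: Perron's formula, a contour shift, the functional equation, and asymptotics of the gamma ratio. The proof runs in parallel to that of Theorem~\ref{truncatedformula}, with $\Delta(s)=\Gamma^2\bigl(\tfrac{s+1}{2}\bigr)$ and $r=1$.

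\textbf{Perron and the contour shift.} Fix $x\geq1$ and a parameter $T$ with $T\asymp \sqrt{Nx}$; its precise choice will tie the truncation point $N$ to the height of the contour. With $b$ slightly larger than $\sigma_a=1$, say $b=1+\varepsilon$, the truncated Perron formula gives
\[
A_0(x)=\frac{1}{2\pi i}\int_{b-iT}^{b+iT}\varphi(s)\frac{(C_\varphi^{-1}x)^{s}}{s}\,ds+O\Bigl(\frac{x^{1+\varepsilon}}{T}+x^{\varepsilon}\Bigr),
\]
where we use $b(n)\ll n^{\varepsilon}$. We then shift the line of integration to $\Re s=a$ with $a=-\varepsilon$. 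The residues at the poles of $\chi(s)/(\Delta(s)s)$ lie in a compact set by property (iii), and together they produce $Q_0(x)$. The horizontal segments are controlled by a convexity (Phragm\'en--Lindel\"of) bound for $\varphi$ in the strip. That bound follows from the functional equation together with Stirling's formula: $|\Delta(1-s)/\Delta(s)|\asymp |t|^{1-2\sigma}$.

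\textbf{The left integral.} On $\Re s=a$ we write $\varphi(s)=\frac{\Delta(1-s)}{\Delta(s)}\varphi(1-s)$. Since $\Re(1-s)>1$, we may expand $\varphi(1-s)$ as an absolutely convergent series, which gives
\[
\sum_{n}b(n)\,\frac{1}{2\pi i}\int_{a-iT}^{a+iT}\frac{\Gamma^2\bigl(1-\tfrac{s}{2}\bigr)}{\Gamma^2\bigl(\tfrac{s+1}{2}\bigr)}\,(C_\varphi n)^{s-1}(C_\varphi^{-1}x)^{s}\,\frac{ds}{s}.
\]
Stirling's formula converts the gamma ratio into $(|t|/2)^{1-2s}e^{\pm i(\cdots)}\bigl(1+O(1/|t|)\bigr)$. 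We then treat the $n$-th integral by the saddle point method, for instance via a standard lemma on exponential integrals of the form $\int g(t)e^{if(t)}\,dt$. The phase is $f(t)=t\log(C_\varphi^2nx)-2t\log(t/2)+2t$, whose stationary point is $t_0=2\sqrt{C_\varphi n x}$. For $n\leq N$ (so that $t_0\leq T$) the stationary phase yields the main term $\frac{1}{\sqrt{2\pi}}C_\varphi^{-3/4}x^{1/4}n^{-3/4}\cos\bigl(4\sqrt{C_\varphi nx}-\pi/4\bigr)$. For $n>N$ there is no stationary point, and the first-derivative test bounds the contribution by endpoint terms of size $x^{a}n^{-a}\ldots/|\log(T^2/(4C_\varphi^2 nx))|$.

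\textbf{Collecting errors; the main obstacle.} The genuine difficulty is the error accounting. At the boundary $\delta_0=\tfrac12$ the Dirichlet series for $\varphi(1-s)$ converges only barely absolutely on $\Re s=-\varepsilon$. Consequently the sums over $n$ near $N$ of the first-derivative-test remainders, $\sum_{n}\frac{|b(n)|}{n^{1+\varepsilon}}\min\bigl(1,\frac{1}{T|\log(T^2/4C_\varphi^2 nx)|}\bigr)$, together with the stationary-phase remainders $O\bigl(x^{-1/4}n^{-5/4}\bigr)$-type terms, contribute $x^{\varepsilon}N^{\varepsilon}$ rather than $O(1)$. This is exactly the second error term in the statement. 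We will handle the $n$ near $T^2/4C_\varphi^2x$ by a dyadic decomposition and the trivial bound, which costs only logarithms that are absorbed into $N^{\varepsilon}$. Finally, taking $T^2\asymp Nx$ makes the Perron error $x^{1+\varepsilon}/T\asymp x^{1/2+\varepsilon}N^{-1/2}$, and this also dominates the horizontal-segment contribution. Together these give the claimed remainder $O\bigl(x^{1/2+\varepsilon}N^{-1/2}+x^{\varepsilon}N^{\varepsilon}\bigr)$.
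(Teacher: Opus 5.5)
Your strategy is sound, but for the terms $n\le N$ it takes a genuinely different route from the paper.

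\textbf{Comparison with the paper.} The paper says Theorem~\ref{truncatedformula3} follows the proof of Theorem~\ref{truncatedformula}, and that proof never does a saddle-point analysis. It treats $n>N$ exactly as you do, with the first-derivative test (Lemma~\ref{expintegrallemma}). For $n\le N$, however, it completes the truncated integral to an infinite contour. It bounds the added tails $|t|>T$ again by the first-derivative test, which works because there is no stationary point there ($2\sqrt{C_\varphi nx}<T$). It then evaluates the completed integral exactly. Here that means writing $\Gamma^2(1-\tfrac s2)/\Gamma^2(\tfrac{s+1}{2})=\tfrac{4^s}{2\pi}\Gamma^2(1-s)(1+\cos\pi s)$ and applying \eqref{I1int}--\eqref{I2int}, which gives $(C_\varphi nx)^{1/2}\{Y_1-\tfrac2\pi K_1\}(4\sqrt{C_\varphi nx})$. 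The cosine and an $O(x^{-1/4}n^{-5/4})$ remainder then follow from Bessel asymptotics. So the paper needs only first-derivative bounds and exact Mellin--Barnes formulas.

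\textbf{What your route buys and costs.} You need no special-function identities and land directly on the cosine. The price is a sharp saddle-point lemma: relative error $O(1/t_0)$ at $t_0=2\sqrt{C_\varphi nx}$, and endpoint terms uniform as $t_0\to T$ (Atkinson's lemma or a modern equivalent). Titchmarsh's Lemma~4.6 is not enough. Its error $O(\lambda_2^{-4/5}\lambda_3^{1/5})$, with $\lambda_2\asymp t_0^{-1}$ and $\lambda_3\asymp t_0^{-2}$, is $\asymp t_0^{2/5}$ per term and sums to order $(Nx)^{1/5}$. That is too large when $N\ge x^{1/2}$, which is the range used in Theorem~\ref{th1}. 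In your favour, you never leave $\Re s=-\varepsilon$. This is a real convenience here: unlike $1/(s\Gamma(s))$, the function $1/(s\Gamma^2(\frac{s+1}{2}))$ has a pole at $s=0$. So the paper's shift from $-a_\varepsilon$ to $-a_\varepsilon+2\varepsilon$ does not carry over verbatim.

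\textbf{First correction: the sign of the main term.} The main term you assert will not come out as displayed. We have $\Gamma^2(1-\frac s2)/\Gamma^2(\frac{s+1}{2})=\cot^2(\frac{\pi s}{2})\,\Gamma^2(\frac{1-s}{2})/\Gamma^2(\frac s2)$, with $\cot^2(\frac{\pi s}{2})=-1+O(e^{-\pi|t|})$. Hence the phase at $t_0$ is $4\sqrt{C_\varphi nx}-\pi$. The saddle point therefore gives $-\frac{1}{\sqrt{2\pi}}C_\varphi^{-3/4}x^{1/4}n^{-3/4}\cos(4\sqrt{C_\varphi nx}-\frac\pi4)$, the negative of the $p=0$ case of Theorem~\ref{truncatedformula2}. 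The paper's own route gives the same sign, since $Y_1(z)\sim-\sqrt{2/(\pi z)}\cos(z-\frac\pi4)$. So the printed statement has lost a minus sign. This is harmless for Theorem~\ref{th1}, since $|P|^4$ is unchanged.

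\textbf{Second correction: the error bookkeeping.}
\begin{enumerate}
\item Use $x^s$ in Perron, as the paper does. Then the phase is $t\log(C_\varphi nx)-2t\log\frac t2+2t$ up to a constant, and you should take $T=2\sqrt{C_\varphi(N+\frac12)x}$.
\item On $\Re s=-\varepsilon$ the $n$-th integrand has amplitude $\asymp|t|^{2\varepsilon}(C_\varphi n)^{-1-\varepsilon}x^{-\varepsilon}$. So the first-derivative and endpoint bounds are $\ll x^{-\varepsilon}T^{2\varepsilon}n^{-1-\varepsilon}\bigl(1+|\log\frac{N+1/2}{n}|^{-1}\bigr)$, with no factor $1/T$.
\item The factor $x^{-\varepsilon}T^{2\varepsilon}\asymp N^{\varepsilon}$ is what produces $x^\varepsilon N^\varepsilon$. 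It arises in three places: these first-derivative and endpoint bounds, the horizontal segments, and the $O(1/|t|)$ Stirling correction integrated trivially.
\item The horizontal segments give $x^{1+\varepsilon}T^{-1}+x^{-\varepsilon}T^{2\varepsilon}$. The second piece is not dominated by the Perron error when $N>x$.
\item The saddle-point remainders themselves contribute only $O(x^{-1/4})$.
\end{enumerate}
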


We next prove that the error term $D_{0}(x)$, suitably normalized, belongs to the Besicovitch space $B^{4}$, a considerably stronger property than $\mathcal{B}^{2}$-almost periodicity. More precisely, we prove the following theorem.

\begin{thm}\label{th1}
Let $r$, $\sigma_a$, and $\delta_0$ satisfy the hypotheses of one of the preceding theorems. Furthermore, suppose that $r-\delta_0\geq\frac{1}{2}$ whenever $\delta_0\neq 0$ with strict inequality when $\delta_0=0$. Then the function $P:[1,\infty)\to\mathbb{C}$ defined by
\begin{align}
P(t)=t^{-r+\frac12}D_{0}(t^2)\label{P(t)defn}
\end{align}
is $\mathcal{B}^4$-almost periodic, and thus admits a limit probability distribution as defined in Definition \ref{probabilitydistribution}. 
\end{thm}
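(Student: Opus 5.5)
We substitute $x=t^2$ in the truncated Vorono\"{i} formulas (Theorems \ref{truncatedformula}--\ref{truncatedformula3}) and normalize by $t^{-r+\frac12}$. In each case the main term becomes a trigonometric polynomial,
\begin{align}
P_N(t)=c_\varphi\sum_{n\le N}\frac{b(n)}{n^{\frac r2+\frac14}}\cos\left(\omega_n t-\theta\right),\qquad \omega_n=2\sqrt{C_\varphi n}\ \text{or}\ 4\sqrt{C_\varphi n},
\end{align}
with $\frac r2+\frac14$ replaced by the corresponding exponent in Theorems \ref{truncatedformula2} and \ref{truncatedformula3}. The error becomes $E_N(t)\ll t^{\delta_0+\varepsilon}N^{-\frac12}+t^{-\delta_0-\varepsilon}$, or $t^{-\delta_0+\varepsilon}N^{\varepsilon}$ when $\delta_0=\frac12$. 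Fix $M$ and use the trigonometric polynomial $P_M$ as the approximant in the definition of $\mathcal{B}^4$-almost periodicity. It suffices to show that
\[
\limsup_{T\to\infty}\frac1T\int_T^{2T}|P(t)-P_M(t)|^4\,dt\to0 \quad\text{as } M\to\infty,
\]
since summing over dyadic blocks then gives the $\limsup_{X\to\infty}$ in \eqref{def_Bq}.

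On $[T,2T]$ we choose $N=N(T)=T^{2\delta_0+\eta}$ with a small $\eta>0$, and write
\[
P-P_M=(P_N-P_M)+E_N .
\]
With this choice, $E_N(t)\ll T^{-\eta/2+\varepsilon}+T^{-\delta_0-\varepsilon}$. When $\delta_0=\frac12$ we instead get $T^{-\frac12+\varepsilon}N^{\varepsilon}$. In every case this is $o(1)$ uniformly on $[T,2T]$, so its fourth moment tends to $0$. By Minkowski's inequality, what remains is to bound the fourth moment of the tail $S(t)=P_N(t)-P_M(t)$.

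To do this we expand $|S|^4$ into a sum over quadruples $(n_1,n_2,n_3,n_4)$ with $M<n_i\le N$. The weights are $c_{n_1}\cdots c_{n_4}$ with $c_n=b(n)n^{-\frac r2-\frac14}$, and the frequencies are the combinations $\pm\omega_{n_1}\pm\omega_{n_2}\pm\omega_{n_3}\pm\omega_{n_4}$. Only combinations of the shape $\Delta=\sqrt{n_1}+\sqrt{n_2}-\sqrt{n_3}-\sqrt{n_4}$ can be small, since the others are $\gg\sqrt M$ and contribute $O(1/T)$ times an absolutely convergent sum. Integrating gives the bound
\[
\frac1T\int_T^{2T}|S|^4\ll\sum |c_{n_1}c_{n_2}c_{n_3}c_{n_4}|\min\left(1,\frac1{T|\Delta|}\right).
\]

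\emph{Diagonal terms ($\Delta=0$).} Writing $n=k^2q$ with $q$ squarefree, the linear independence of square roots of squarefree integers shows that $\Delta=0$ forces the squarefree kernels to pair up. The number of such solutions is $\ll N^{2+\varepsilon}$. Since $b(n)\ll n^\varepsilon$, the diagonal contribution is then
\[
\ll\Big(\sum_{n>M}n^{-r-\frac12+\varepsilon}d(n)\Big)^2\ \text{up to }n^\varepsilon\text{ factors},
\]
which tends to $0$ as $M\to\infty$ because $r>\frac12$; this lower bound on $r$ follows from the hypothesis on $r-\delta_0$.

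\emph{Off-diagonal terms ($\Delta\ne0$).} We split dyadically, with $n_i\sim N_i$ and $|\Delta|\sim\delta$. For $\delta\ge 1/T$ we bound the contribution by $\frac1{T\delta}$ times the number of quadruples, and for smaller $\delta$ by the count itself. The counts come from the Robert--Sargos bound
\[
\#\{n_i\sim K:\ |\Delta|<\delta K^{\frac12}\}\ll K^{\varepsilon}\left(K^4\delta+K^2\right),
\]
together with the algebraic lower bound $|\Delta|\gg K^{-\frac32}$ whenever $\Delta\neq0$. Combined with the weights $K^{-2r-1+\varepsilon}$, we need the resulting sum to be $o(1)$ uniformly for $K\le N=T^{2\delta_0+\eta}$, up to a tail that is small in $M$. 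Balancing the term $K^{4-2r-1}\cdot\frac{1}{T}K^{-\frac12}$ against $K\le T^{2\delta_0+\eta}$ is exactly where the hypothesis $r-\delta_0\ge\frac12$ (strict when $\delta_0=0$) should enter.

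I expect this off-diagonal estimate to be the main obstacle. The choice $N=T^{2\delta_0+\eta}$ is forced by the error term $E_N$, but the off-diagonal sum degenerates as $N$ grows. The first thing I would try is to choose $\eta$ small relative to the margin in $r-\delta_0-\frac12$, and to handle the boundary case $r-\delta_0=\frac12$ by exploiting the $\varepsilon$-savings in the error terms. Once $P\in B^4$ is established, Remark \ref{remark1} gives $P\in H$, and Theorem \ref{Blehertheorem} then yields the limit probability distribution.
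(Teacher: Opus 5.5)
Your skeleton matches the paper's: dyadic ranges $[T,2T]$, truncation at a power of $T$ so the Vorono\"{i} error is $o(1)$, expansion of the fourth power, then diagonal plus near-diagonal counting. Your $N=T^{2\delta_0+\eta}$ is also more economical than the paper's $N=M^{\theta}$, $\theta>1$ (its $M$ is your $T$). The proof breaks, though, at the step that carries the weight.

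\textbf{The three-plus-one terms are wrongly discarded.} You assert that only $\sqrt{n_1}+\sqrt{n_2}-\sqrt{n_3}-\sqrt{n_4}$ can be small. This is false when the $n_i$ range independently over $(M,N]$. The frequencies $\sqrt{n_1}+\sqrt{n_2}+\sqrt{n_3}-\sqrt{n_4}$ vanish at $(h,h,h,9h)$ for every $h>M$, and are tiny for many other quadruples. They are exactly what produces the $\sin(r\pi)$ sum in $\mathcal{M}_4$. In the paper these are the terms $S_3,S_4,S_5$, and controlling them is most of the work:
\begin{itemize}
\item for fixed $n\le m\le k$ only $O(1+|\Delta_1|\sqrt{k})$ values of $l$ are admissible;
\item Lemma \ref{lem2} is applied with $\alpha=2(\sqrt n+\sqrt m)$ and $\beta=2\sqrt{nm}$;
\item Lemma \ref{lem1} supplies the lower bound on $|\Delta_1|$.
\end{itemize}
Your argument drops this whole family of resonant terms.

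\textbf{The two-plus-two off-diagonal estimate is only announced.} The Robert--Sargos count you quote applies to quadruples in a single block $(K,2K]^4$. Your expansion over all $n_i\in(M,N]$, however, produces near-coincidences in which only the largest variable on each side is forced to be of comparable size. If the smaller variables are counted trivially, the range $|\Delta|<1/T$ costs about $K^{2-2r}$, which is not small for $r\le 1$. The paper handles such mixed sizes by fixing the smaller variables and applying Lemma \ref{lem2}. Your lower bound $|\Delta|\gg K^{-3/2}$ is also false. The quadruple $(16s^2+10,\,9s^2+3,\,16s^2+6,\,9s^2+6)$ lies in one dyadic block and has $0<|\Delta|\asymp s^{-5}$. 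The correct bound is Lemma \ref{lem1}, and you need none at all if $\{|\Delta|\le 1/T\}$ is treated as a single set.

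\textbf{How to repair it.} Both defects disappear if you apply Minkowski's inequality over dyadic blocks \emph{before} expanding. Write $S=\sum_K S_K$, where $S_K$ is the part with $K<n\le 2K$, and use $\|S\|_{L^4[T,2T]}\le\sum_K\|S_K\|_{L^4[T,2T]}$. Inside one block every three-plus-one or all-plus frequency is at least $(3-\sqrt2)\sqrt K$. Robert--Sargos then applies verbatim and gives
\[
\frac1T\int_T^{2T}|S_K(t)|^4\,dt\ll K^{1-2r+\varepsilon}+K^{\frac52-2r+\varepsilon}\,T^{-1}\log T .
\]
Take fourth roots and sum over the $O(\log T)$ blocks with $M<K\le N$. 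This yields $\ll M^{\frac{1-2r}{4}+\varepsilon}+o(1)$, provided $(2\delta_0+\eta)(\frac52-2r)<1$. That condition holds here: $r-\delta_0\ge\frac12$ gives $2\delta_0(\frac52-2r)\le 2\delta_0(\frac32-2\delta_0)\le\frac{9}{16}$. So the boundary case $r-\delta_0=\frac12$ needs no special care.

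Completed this way, your route would be genuinely shorter than the paper's: one Robert--Sargos count per block replaces Tsang's two lemmas over mixed ranges. As written, however, it drops a resonant family and leaves the decisive estimate unproved.
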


\begin{rem}
    In view of Remark \ref{remark1}, $P(t)$ defined in \eqref{P(t)defn} lies in the Besicovitch space $B^q$ for $q=1,2,3,4$, and hence $P(t)\in H$. Therefore, by Theorem \ref{Blehertheorem}, it possesses a limit probability distribution.
\end{rem}

\begin{cor}[Fourth Power Moment Formula]
\label{cor:fourth_moment}
 Under the assumptions of Theorem \ref{th1}, the fourth power moment of the normalized error term $P(t) = t^{-r+\frac 12}D_{0}(t^2)$ exists and is explicitly given by
\[
\lim_{X \to \infty} \frac{1}{X} \int_1^X |P(t)|^4 \, dt = \mathcal{M}_4,
\]
where
\begin{align}
\mathcal{M}_4 &= \frac{3}{8\pi^2} C_{\varphi}^{-2r-1} \sum_{\substack{n,m,k,l \ge 1 \\ \sqrt{n}+\sqrt{m}=\sqrt{k}+\sqrt{l}}} \frac{b(n)\overline{b(m)}b(k)\overline{b(l)}}{(nmkl)^{\frac{r}{2}+\frac{1}{4}}} \\
&- \frac{\sin(r\pi)}{2\pi^2} C_{\varphi}^{-2r-1} \sum_{\substack{n,m,k,l \ge 1 \\ \sqrt{n}+\sqrt{m}+\sqrt{k}=\sqrt{l}}} \frac{b(n)\overline{b(m)}b(k)\overline{b(l)}}{(nmkl)^{\frac{r}{2}+\frac{1}{4}}}.
\end{align}
\end{cor}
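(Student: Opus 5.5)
Since Theorem~\ref{th1} gives $P\in B^4$, we may take $P_N(t)=c\sum_{n\le N} b(n)n^{-\frac r2-\frac14}\cos(2\sqrt{C_\varphi n}\,t-\phi)$, the normalized truncated sum supplied by Theorems~\ref{truncatedformula}--\ref{truncatedformula3} after substituting $x=t^2$. (In the Gamma-squared cases the frequency $4\sqrt{C_\varphi n}$ replaces $2\sqrt{C_\varphi n}$ and the constant changes; I treat the $\Gamma(s)$ normalisation, where $c=\pi^{-1/2}C_\varphi^{-\frac r2-\frac14}$ and $\phi=\frac{r\pi}2+\frac\pi4$. The stated constant $C_\varphi^{-2r-1}=c^4\pi^2$ matches this case.) The plan is to show $\|P-P_N\|_4\to0$ as $N\to\infty$; I take this to be exactly what the proof of Theorem~\ref{th1} establishes. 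Then $\lim \frac1X\int_1^X|P|^4$ exists and equals $\lim_{N}\lim_{X}\frac1X\int_1^X|P_N|^4$. Justification: by Minkowski's inequality, $\bigl|\|P\|_{4,X}-\|P_N\|_{4,X}\bigr|\le\|P-P_N\|_{4,X}$, where $\|\cdot\|_{4,X}$ denotes the finite-$X$ $L^4$ average. Taking $\limsup_X$ and $\liminf_X$ and then letting $N\to\infty$ shows that the $\limsup$ and $\liminf$ of $\|P\|_{4,X}$ coincide, and that the common value is $\lim_N \mathcal M_4(N)$. Here $\mathcal M_4(N)$, the mean of $|P_N|^4$, exists because $P_N$ is a trigonometric polynomial.

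Next I compute $\mathcal M_4(N)$. Write $\cos\theta_n=\frac12(e^{i\theta_n}+e^{-i\theta_n})$ and expand $|P_N|^4=P_N^2\overline{P_N}^2$. This gives $2^{-4}c^4$ times a sum over $n,m,k,l\le N$ and sign choices $\epsilon_j\in\{\pm1\}$ of terms
\[ e^{2i\sqrt{C_\varphi}\,t(\epsilon_1\sqrt n+\epsilon_2\sqrt m+\epsilon_3\sqrt k+\epsilon_4\sqrt l)}\,e^{-i\phi(\epsilon_1+\epsilon_2+\epsilon_3+\epsilon_4)} \]
with coefficient $b(n)\overline{b(m)}b(k)\overline{b(l)}(nmkl)^{-\frac r2-\frac14}$ (I order conjugates to match the target formula). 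The mean of an exponential is $1$ if the frequency is $0$ and $0$ otherwise, so only sign patterns with $\sum\epsilon_j\sqrt{\cdot}=0$ survive.

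Two-plus/two-minus patterns: there are $6$ of them, each phase-free, and after relabeling each is the condition $\sqrt n+\sqrt m=\sqrt k+\sqrt l$. This gives $\frac{6}{16}c^4=\frac{3}{8\pi^2}C_\varphi^{-2r-1}$. Three-and-one patterns: there are $8$, with phase $e^{\mp 2i\phi}$ and $2\phi=r\pi+\frac\pi2$. They pair into $2\cos(r\pi+\frac\pi2)=-2\sin r\pi$ times $4$ relabelings of $\sqrt n+\sqrt m+\sqrt k=\sqrt l$, giving $-\frac{8}{16}\sin(r\pi)c^4$. This yields the second term, using the symmetry of $b$ under the relabelings (for real $b$ this is immediate). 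All-equal-sign patterns have positive frequency and contribute nothing. So $\mathcal M_4(N)$ is $\mathcal M_4$ with all variables $\le N$.

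Finally, I let $N\to\infty$. The main obstacle is absolute convergence of the two quadruple sums; with that in hand, the limit of $\mathcal M_4(N)$ is $\mathcal M_4$ by dominated convergence. I would invoke the standard estimate of the number of solutions of $\sqrt n+\sqrt m=\sqrt k+\sqrt l$ with $n\le m,k\le l$ and $l\sim L$ (Tsang's lemma, essentially $\ll L^{1+\varepsilon}$ nontrivial solutions beyond the diagonal ones). For the three-term equation, given $n,m,k$ at most one $l$ exists, and $l\asymp\max$ forces a similar count. Combined with $b(n)\ll n^\varepsilon$ and the weight $(nmkl)^{-\frac r2-\frac14}$, the hypothesis $r-\delta_0\ge\frac12$ (so $r\ge\frac12$ with some room) gives convergence. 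The diagonal $n=k,m=l$ contributes $(\sum|b(n)|^2n^{-r-\frac12})^2$, which converges when $r>\frac12$. In the borderline $r=\frac12$, $\delta_0=0$ is excluded, which is where I expect the hypothesis to be used. Alternatively, convergence can be obtained for free: since $\mathcal M_4(N)$ converges, nonnegativity-free arguments need care, so I would prove absolute convergence directly via the counting lemma.
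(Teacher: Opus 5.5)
Your route is the paper's. You approximate $P$ by the truncated Vorono\"{i} polynomial $p_J$ from the proof of Theorem~\ref{th1}, pass to the limit using $\|P-p_J\|_4\to0$, and evaluate the mean of $|p_J|^4$ by orthogonality. Your Minkowski argument is a correct, more careful version of the paper's one-line appeal to ``continuity of the norm'', and it already gives existence of $\lim_N\mathcal M_4(N)$. Your sign-pattern count supplies the computation the paper only asserts.

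Two caveats on that computation, both of which you half-noticed:
\begin{itemize}
\item You are right that the stated constants belong to the $\Gamma(s)$ case. In the other two cases the amplitude is $(2\pi)^{-1/2}C_\varphi^{-\frac r2-\frac14}$ (since $r=p+1$), so the first constant becomes $\frac{3}{32\pi^2}C_\varphi^{-2r-1}$. The second term vanishes there because $r\in\mathbb{Z}$.
\item For complex $b$ the relabeling is not merely ``non-immediate'' but false. Write $a_n$ for the coefficients of $P_N$. The pairing $\sqrt n+\sqrt k=\sqrt m+\sqrt l$ yields $\sum_s\bigl|\sum_{\sqrt n+\sqrt k=s}a_na_k\bigr|^2$, not $\sum_s\bigl(\sum_{\sqrt n+\sqrt m=s}a_n\overline{a_m}\bigr)^2$. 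For example, for $\cos\theta_1+i\cos\theta_4$ the true fourth moment is $\frac54$, while the formula's analogue gives $\frac34$. So the corollary must be read with $b$ real, as the paper tacitly does when it groups $|S(t)|^4$ into $S_1,\dots,S_7$.
\end{itemize}

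The genuine gap is your last paragraph, the convergence argument.
\begin{itemize}
\item The number of non-diagonal solutions of $\sqrt n+\sqrt m=\sqrt k+\sqrt l$ with maximum $\sim L$ is $\asymp L^{3/2}$, not $L^{1+\varepsilon}$. Already $n=\alpha^2,\dots,l=\phi^2$ with $\alpha+\beta=\gamma+\phi$ and $\alpha,\beta,\gamma,\phi\le\sqrt L$ gives that many.
\item Two of the variables may be $O(1)$, so the best uniform weight is $L^{-r-\frac12}$. Count times weight is then $L^{1-r}$ per dyadic block, which diverges for $r\le1$, i.e.\ in the main cases such as $r=1$.
\item For the three-term equation, ``at most one $l$'' only gives $\sum_{n\le m\le k}(nm)^{-\frac r2-\frac14}k^{-r-\frac12}$, which for $r=1$ is $\asymp\sum_k k^{-1}$ and diverges.
\end{itemize}

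What you need is the structural fact used in the paper's estimates of $S_1$ and $S_4$. Off the diagonal $\{n,m\}=\{k,l\}$, both equations force $n=h\alpha^2$, $m=h\beta^2$, $k=h\gamma^2$, $l=h\phi^2$ with a common square-free $h$, and $\alpha+\beta=\gamma+\phi$, resp.\ $\alpha+\beta+\gamma=\phi$. With $b(n)\ll n^{\varepsilon}$ and $\rho=r+\frac12-\varepsilon$, the non-diagonal parts are bounded by
\[
\sum_{h\ge1}h^{-2r-1+\varepsilon}\sum_{s\ge2}\Bigl(\sum_{\alpha+\beta=s}(\alpha\beta)^{-\rho}\Bigr)^2\ll\sum_{h\ge1}h^{-2r-1+\varepsilon}\sum_{s\ge2}s^{-2\rho}
\]
and
\[
\sum_{h\ge1}h^{-2r-1+\varepsilon}\sum_{\alpha,\beta,\gamma\ge1}(\alpha\beta\gamma)^{-\rho}(\alpha+\beta+\gamma)^{-\rho}.
\]
The diagonal contributes at most $2\bigl(\sum_n|b(n)|^2n^{-r-\frac12}\bigr)^2$. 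All three are finite as soon as $r>\frac12$.

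The hypotheses of Theorem~\ref{th1} always give $r>\frac12$: if $\delta_0>0$ then $r\ge\frac12+\delta_0$, and if $\delta_0=0$ the inequality is strict. So there is no borderline case $r=\frac12$ to worry about. This absolute convergence is exactly what identifies your $\lim_N\mathcal M_4(N)$ with the unordered series in the statement, a step the paper omits.
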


\section{Preliminaries}\label{prelimm}

We begin by collecting several auxiliary lemmas that will be used throughout the proofs in the subsequent sections.

\begin{lem}\cite[\S 21]{Rademacher}\label{Gamma}
Let $\delta<\pi$ be a fixed positive number. Then in any vertical strip $a\leq\sigma\leq b$ with $|arg\hspace{1.5mm}s|\leq \pi-\delta$ and $|t|\geq 1$, 
\begin{align}
					\Gamma(s)= \sqrt{2\pi}\ |t|^{\sigma-\frac{1}{2}}e^{-\frac{1}{2}\pi |t|}
        e^{i(t\ln |t|-t+ \operatorname{sgn}(t) \frac{\pi}{2} (\sigma-\frac12) )}
     \left( 1+O\left(\frac{1}{|t|}\right)\right).\label{Gammaasym}
				\end{align}
			\end{lem}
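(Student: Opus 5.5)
We deduce the lemma from the complex form of Stirling's formula,
\begin{align}
\log\Gamma(s)=\Big(s-\tfrac12\Big)\log s-s+\tfrac12\log(2\pi)+O\!\left(\frac{1}{|s|}\right),
\end{align}
valid uniformly in the sector $|\arg s|\le\pi-\delta$, $|s|\ge 1$, with the principal branch of the logarithm. This is the standard result of \cite[\S 21]{Rademacher}. To justify it, we apply Euler--Maclaurin summation to $\log\Gamma(s+N)-\log\Gamma(s)=\sum_{k=0}^{N-1}\log(s+k)$, or equivalently we use Binet's integral
\begin{align}
\int_0^\infty\frac{\tfrac12-\{u\}}{s+u}\,du .
\end{align}
Integrating by parts once, this integral is $O\big(\int_0^\infty|s+u|^{-2}\,du\big)$. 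The quantity $|s+u|^{-2}$ is bounded by $(\sin\delta)^{-2}\,(|s|+u)^{-2}$ in the sector, which gives the uniform $O(1/|s|)$ error.

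By the reflection symmetry $\Gamma(\bar s)=\overline{\Gamma(s)}$ it suffices to treat $t\ge 1$. The case $t\le -1$ then follows by conjugation, which produces the factor $\operatorname{sgn}(t)$ in the phase.

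For $t\to\infty$ with $a\le\sigma\le b$, we expand
\begin{align}
\log|s|=\log t+\tfrac12\log\!\Big(1+\tfrac{\sigma^2}{t^2}\Big)=\log t+O(t^{-2}),\qquad \arg s=\frac{\pi}{2}-\arctan\frac{\sigma}{t}=\frac{\pi}{2}-\frac{\sigma}{t}+O(t^{-3}).
\end{align}
Multiplying by $s-\frac12=(\sigma-\frac12)+it$ and keeping terms up to $O(1/t)$, we obtain
\begin{align}
\Big(s-\tfrac12\Big)\log s=\Big(\sigma-\tfrac12\Big)\log t-\frac{\pi t}{2}+\sigma+i\Big(t\log t+\frac{\pi}{2}\Big(\sigma-\tfrac12\Big)\Big)+O\!\left(\frac1t\right).
\end{align}
Subtracting $s=\sigma+it$ cancels the $\sigma$ in the real part. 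We are left with real part $(\sigma-\frac12)\log t-\frac{\pi}{2}t+\frac12\log 2\pi$ and imaginary part $t\log t-t+\frac{\pi}{2}(\sigma-\frac12)$, up to $O(1/t)$. Exponentiating and using $e^{O(1/t)}=1+O(1/t)$ gives \eqref{Gammaasym} for large $t$.

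For $1\le |t|\le T_0$ with $T_0$ fixed, both sides are continuous and nonvanishing on a compact set, so the ratio is $1+O(1)=1+O(1/|t|)$. All implied constants depend only on $a$, $b$ and $\delta$.

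The only genuinely delicate point is the uniformity of the Stirling remainder in the whole sector, i.e.\ the lower bound $|s+u|\gg_\delta |s|+u$. The rest is routine Taylor expansion, in which one must keep the cross term $it\cdot(-i\sigma/t)=\sigma$. It cancels against $-s$ and is easy to drop by mistake.
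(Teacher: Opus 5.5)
Your argument is correct and is essentially the standard derivation behind the cited result. The paper gives no proof of its own and simply quotes Rademacher \S 21, where this asymptotic is read off from Stirling's formula with the Euler--Maclaurin (Binet) remainder as you do, conjugation step for $t<0$ included. One cosmetic correction: the sector inequality should read $|s+u|\ge\sin(\delta/2)\,(|s|+u)$, since your $\sin\delta$ fails for $\delta<2\pi/3$ at $|s|=u$, $\arg s=\pm(\pi-\delta)$; this only changes the implied constant.
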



\begin{lem}\cite[Lemma 4.3]{Titchmarsh}\label{expintegrallemma}
Let $F(x)$ and $G(x)$ be two real-valued functions in an interval $[a,b]$, where $G(x)$ is continuous and $F(x)$ is continuously differentiable. Suppose that $G(x)/F'(x)$ is monotonic and $|F'(x)/G(x)|\geq m>0$. Then
$$\left|\int_a^bG(x)e^{iF(x)}dx\right|\leq \frac 4m.$$
\end{lem}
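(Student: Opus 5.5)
We plan to prove the bound by writing the integrand as a monotone factor times an exact derivative, and then applying the second mean value theorem for integrals. The hypothesis $|F'(x)/G(x)|\geq m>0$ forces $F'(x)\neq 0$ (and $G(x)\neq0$) on $[a,b]$. Since $F'$ is continuous, it has constant sign, so the quotient $f(x):=G(x)/F'(x)$ is a well-defined continuous function. By hypothesis $f$ is monotonic, and it satisfies $|f(x)|\leq 1/m$ throughout $[a,b]$.

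First we reduce to a real integral. Let $I=\int_a^b G(x)e^{iF(x)}\,dx$ and choose $\theta\in\mathbb{R}$ with $e^{-i\theta}I=|I|$. Then
\begin{align}
|I|=\Re\big(e^{-i\theta}I\big)=\int_a^b f(x)\,F'(x)\cos\big(F(x)-\theta\big)\,dx .
\end{align}
This rotation is what yields the constant $4$ rather than $4\sqrt2$. Bounding the real and imaginary parts separately would lose a factor $\sqrt2$.

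Next we apply the second mean value theorem in the general form valid for any monotonic $f$ and continuous $h$. With $h(x)=F'(x)\cos(F(x)-\theta)$, it gives a point $\xi\in[a,b]$ such that
\begin{align}
\int_a^b f(x)h(x)\,dx=f(a)\int_a^{\xi}h(x)\,dx+f(b)\int_{\xi}^{b}h(x)\,dx .
\end{align}
Each inner integral is an exact difference, for example $\int_a^\xi h=\sin(F(\xi)-\theta)-\sin(F(a)-\theta)$, so each has absolute value at most $2$. Combining this with $|f(a)|,|f(b)|\leq 1/m$ gives $|I|\leq 2/m+2/m=4/m$, as claimed.

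The only delicate point is justifying the second mean value theorem when $f$ is merely monotonic and may change sign, rather than being nonnegative and monotone as in Bonnet's form. We would handle this as follows. Assume without loss of generality that $f$ is nonincreasing (otherwise replace $f$ by $-f$ and $h$ by $-h$). Write $f=f(b)+g$ with $g:=f-f(b)$ nonnegative and nonincreasing. Bonnet's form applied to $g$ gives
\begin{align}
\int_a^b g\,h=\big(f(a)-f(b)\big)\int_a^\xi h .
\end{align}
Adding $f(b)\int_a^b h$ then yields exactly the displayed identity.
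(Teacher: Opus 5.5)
Your proof is correct, but it reaches the constant $4$ by a different route from the one the paper relies on. The paper gives no proof of its own and only cites Titchmarsh's Lemma 4.3. There the hypothesis is $F'/G\ge m$ or $F'/G\le -m$, so $G/F'$ has a fixed sign. Bonnet's one-term form of the second mean value theorem is then applied separately to the real and imaginary parts, each bounded by $2/m$, and the constant $4$ comes from $|I|\le|\Re I|+|\Im I|$. You instead rotate to a single real integral and use the two-term (du Bois-Reymond) form, so your $4$ comes from the two endpoint terms $f(a)$ and $f(b)$.

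What your route buys is that it never uses the sign of $G/F'$. It therefore stays valid under a looser reading of the hypothesis in which $G$ may vanish and $G/F'$ passes through $0$. Titchmarsh's route is shorter once the sign information is available.

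One correction to your commentary: splitting into real and imaginary parts loses a factor $\sqrt2$ only because you use the two-term form. You already note that $F'$ and $G$ are continuous and nonvanishing, so $f=G/F'$ has constant sign. Combining your rotation with Bonnet's one-term form then gives the sharper bound $|I|\le 2/m$. This is attained by $F(x)=mx$, $G\equiv 1$ on $[0,\pi/m]$.
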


\begin{lem}\cite[Lemma 3.12]{Titchmarsh}\label{Perron}
Let $F(s)=\sum_{n=1}^{\infty}a(n)n^{-s}$ be a Dirichlet series with abscissa
of absolute convergence $\sigma_a$. For $c>\max(0,\sigma_a)$,
$T\ge1$, and $x=N+\tfrac12$ with $N\in\mathbb{N}$, we have
\begin{align}
\sideset{}{'}\sum_{n\le x} a(n)
=
\frac{1}{2\pi i}\int_{c-iT}^{c+iT}
F(s)\frac{x^s}{s}\,ds
+
O\!\left(\frac{x^{c}B(x)\log x}{T}\right)
+
O\!\left(\frac{x^{c}F(c)}{T}\right),
\end{align}
where
\[
B(x)=\max_{x/2\le n\le 3x/2}|a(n)|.
\]
\end{lem}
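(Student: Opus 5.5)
The plan is the classical derivation of the truncated Perron formula, as in \cite[Lemma 3.12]{Titchmarsh}. The key input is the truncated discontinuous integral: for $y>0$, $y\neq 1$, $c>0$, $T\ge 1$,
\begin{align}
\frac{1}{2\pi i}\int_{c-iT}^{c+iT}\frac{y^s}{s}\,ds=\begin{cases}1+O\!\left(\dfrac{y^c}{T|\log y|}\right), & y>1,\\[2mm] O\!\left(\dfrac{y^c}{T|\log y|}\right), & 0<y<1.\end{cases}
\end{align}

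\textbf{Step 1: the discontinuous integral.} To prove this, for $y>1$ close the segment by a rectangle to the far left, with horizontal sides at height $\pm T$. The residue at $s=0$ is $1$. The left vertical side contributes nothing in the limit, and each horizontal side is bounded by
\begin{align}
\int_{-\infty}^{c}\frac{y^{\sigma}}{T}\,d\sigma=\frac{y^c}{T\log y}.
\end{align}
For $0<y<1$, close the rectangle to the right instead; there are no poles, and the same bound holds with $|\log y|$.

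\textbf{Step 2: interchange and reduce to a sum over $n$.} Since $c>\sigma_a$, the Dirichlet series converges absolutely on the segment, so I will interchange sum and integral with $y=x/n$. Because $x=N+\tfrac12$, no $n$ satisfies $n=x$, so the case $y=1$ and the primed weight never occur. This gives
\begin{align}
\frac{1}{2\pi i}\int_{c-iT}^{c+iT}F(s)\frac{x^s}{s}\,ds=\sum_{n<x}a(n)+O\!\left(\frac{x^c}{T}\sum_{n=1}^\infty\frac{|a(n)|}{n^{c}\,|\log(x/n)|}\right).
\end{align}

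\textbf{Step 3: split the error sum.} For $n<x/2$ or $n>3x/2$ we have $|\log(x/n)|\gg 1$. These terms therefore contribute
\begin{align}
\ll \frac{x^c}{T}\sum_{n}|a(n)|n^{-c},
\end{align}
which is the $F(c)$ term, with $F(c)$ read as the absolutely convergent majorant $\sum|a(n)|n^{-c}$. For $x/2\le n\le 3x/2$, write $n=x\pm u$ with $u\ge \tfrac12$. Then $|\log(x/n)|\gg u/x$, $|a(n)|\le B(x)$, and $n^{-c}\asymp x^{-c}$. The factors $x^c n^{-c}$ cancel up to constants, and summing $x/u$ over the half-integer-spaced $u\le x/2$ gives a total of
\begin{align}
\ll \frac{B(x)\,x\log x}{T}.
\end{align}

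The only point needing care is this last step, where the error comes out naturally as $xB(x)\log x/T$. That is $\le x^cB(x)\log x/T$ when $c\ge1$, which is the regime used later in the paper (where $c>\sigma_a\ge1$ in the applications). For $0<c<1$, I expect to state the bound as $x\,B(x)\log x/T$, or absorb the discrepancy into the implied constants in the setting where the lemma is applied. Everything else is routine contour bookkeeping.
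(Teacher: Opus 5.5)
Your argument is correct and is the standard proof of \cite[Lemma 3.12]{Titchmarsh}, which the paper cites without proof: rectangle contours for the truncated discontinuous integral, termwise integration justified by absolute convergence, and the split at $x/2\le n\le 3x/2$ using $|n-x|\ge\frac12$. Reading $F(c)$ as $\sum|a(n)|n^{-c}$ is indeed necessary, and Titchmarsh states it that way.

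Your $0<c<1$ caveat points to a genuine error in the statement, so the option of absorbing the discrepancy $x^{1-c}$ into implied constants is not available. Take $a(n)=1$ for $n$ a power of $2$ and $a(n)=0$ otherwise (so $\sigma_a=0$), with $c=\frac12$, $x=2^k+\frac12$ and $T=x^{3/4}$. For the term $n=2^k$ we have $T\log(x/n)\to0$, so its truncated integral tends to $\frac12$ rather than $1$. All other terms contribute only $O(x^{-1/4})$, yet the stated error is $O(x^{-1/4}\log x)$.

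So the $B$-term must be carried as $xB(x)\log x/T$, which is your first option and also Titchmarsh's form (he takes $c>1$). Your parenthetical that $c>\sigma_a\ge1$ in the applications holds for the paper's examples but is not forced by its hypotheses. Theorem~\ref{truncatedformula} allows $\sigma_a=\frac{\delta_0}{2}+\frac r2+\frac14<1$.
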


\begin{lem}\cite[\S 1.4]{Jutila}\label{jutilabesselrepresentation}
    Let $a$ be a nonnegative integer, $\sigma_1\geq -a/2$, $\sigma_2<-a$, $T>0$, and let $C_a$ be the contour joining the points $\sigma_1-i\infty$, $\sigma_1-iT$, $\sigma_2-iT$, $\sigma_2+iT$, $\sigma_1+iT$, and $\sigma_1+i\infty$ by straight lines. Let $X>0$ and $k$ be a positive integer. Then
    \begin{align}
        I_1:=\frac{1}{2\pi i}\int_{C_a}\Gamma^2(1-s)X^s(s(s+1)\dots (s+a))^{-1}ds=2(-1)^{a+1}X^{(1-a)/2}K_{a+1}(2X^{1/2}),\label{I1int}
    \end{align}
    and
    \begin{align}
        I_2:=\frac{1}{2\pi i}\int_{C_a}\Gamma^2(1-s)\cos (\pi s)X^s(s(s+1)\dots (s+a))^{-1}ds=\pi X^{(1-a)/2}Y_{a+1}(2X^{1/2}).\label{I2int}
    \end{align}
    Here, $Y_{\nu}(z)$ and $K_{\nu}(z)$ represent the Bessel functions of nonnegative integral order $\nu$.
\end{lem}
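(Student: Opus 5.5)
The plan is to prove both identities by induction on $a$, using the operator $X\frac{d}{dX}$ to lower the index. The base case will be a standard Mellin--Barnes representation of $K_\nu$ and $Y_\nu$.

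\emph{Inductive step.} Write $I_1(a;X)$ and $I_2(a;X)$ for the integrals in \eqref{I1int} and \eqref{I2int}. On the horizontal and vertical pieces of $C_a$, Lemma~\ref{Gamma} gives $|\Gamma^2(1-s)|\asymp|t|^{1-2\sigma}e^{-\pi|t|}$. The extra factor $|\cos\pi s|\ll e^{\pi|t|}$ in $I_2$ cancels this exponential decay. We are then left with $|t|^{1-2\sigma_1}\cdot|t|^{-a-1}$ on the tails, and the hypothesis $\sigma_1\ge -a/2$ is exactly what is needed for (conditional) convergence there. Since the integrand is analytic in $X$, we may differentiate under the integral locally uniformly; in the oscillatory case we may first shift the tails slightly to the right to get absolute convergence.

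Applying $X\frac{d}{dX}$ multiplies the integrand by $s$. This cancels one factor of the denominator, and after re-indexing leaves $s(s+1)\cdots(s+a-1)$, i.e.\ the case $a-1$. The point $s=-a$ is no longer a pole, so the contour $C_a$ may be deformed into $C_{a-1}$ (recall $\sigma_2<-a<-(a-1)$) without crossing any singularity. Hence
\[
X\frac{d}{dX}I_j(a;X)=I_j(a-1;X)\qquad (j=1,2).
\]
On the Bessel side, the relations $(z^{-\nu}K_\nu(z))'=-z^{-\nu}K_{\nu+1}(z)$ and $(z^{-\nu}Y_\nu(z))'=-z^{-\nu}Y_{\nu+1}(z)$ with $z=2X^{1/2}$ show that the right-hand sides obey the same relation. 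For $I_1$ the sign $(-1)^{a+1}$ alternates in precisely the way these relations require. The two sides therefore agree up to an additive constant at each step. That constant is fixed by letting $X\to0^+$: shift the vertical part of $C_a$ left past the poles $s=0,-1,\dots,-a$. Both sides then have the same principal (Laurent-type) terms, namely the residues at these poles, which are finite sums of $X^{-k}$. The remaining integral is $o(1)$, so the constant is $0$.

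\emph{Base case.} For $a=0$, apply the operator once more to get
\[
X\frac{d}{dX}I_1(0;X)=\frac1{2\pi i}\int\Gamma^2(1-s)X^s\,ds .
\]
Substituting $w=s-1$, this is $X\cdot\frac1{2\pi i}\int\Gamma^2(-w)X^{w}\,dw$, and this equals $2XK_0(2\sqrt X)$ by Mellin inversion of $\int_0^\infty K_0(2\sqrt x)x^{w-1}\,dx=\tfrac12\Gamma(w)^2$. This matches $X\frac{d}{dX}\bigl(-2X^{1/2}K_1(2X^{1/2})\bigr)$, and the constant is fixed as above.

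For $I_2$, rewrite $\Gamma^2(1-s)\cos\pi s=\pi\,\Gamma(1-s)\cot(\pi s)/\Gamma(s)$ using $\Gamma(s)\Gamma(1-s)=\pi/\sin\pi s$. Then use the Mellin transform $\int_0^\infty Y_0(2\sqrt x)x^{w-1}\,dx=-\frac1\pi\Gamma(w)^2\cos(\pi w)$, valid in a strip, which follows from the known Mellin transform of $Y_\nu$ after substituting $x=u^2/4$.

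\emph{Main obstacle.} I expect the main difficulty to be the analytic bookkeeping for $I_2$. Since $\cos\pi s$ destroys the exponential decay, the contour integrals are only conditionally convergent. Justifying the differentiation under the integral sign, the contour deformations, and the $X\to0$ limit will then need Lemma~\ref{expintegrallemma}, applied to the oscillatory factor $e^{i(2t\log|t|-\ldots)}$, to control the tails uniformly. The condition $\sigma_1\ge -a/2$ is what makes this second-derivative-type bound work.
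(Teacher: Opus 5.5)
\textbf{There is a genuine gap: the inductive identity $X\frac{d}{dX}I_j(a;X)=I_j(a-1;X)$ is false.} The paper gives no proof of this lemma; it quotes it from Jutila. The direct route is to evaluate the integrals by residues, since all poles ($s=0,-1,\dots,-a$ and $s=1,2,\dots$) lie to the right of $C_a$, and then recognize the ascending series of $K_{a+1}$ and $Y_{a+1}$. Your induction on $a$ could replace that bookkeeping, but its key step fails as written.

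The operator $X\frac{d}{dX}$ multiplies the integrand by $s$. It therefore cancels the factor $s$, removing the pole at $s=0$ (not at $s=-a$), and leaves $(s+1)\cdots(s+a)$. To rewrite this as $s(s+1)\cdots(s+a-1)$ you must substitute $s\mapsto s-1$. That turns $\Gamma^2(1-s)X^s$ into $X^{-1}\Gamma^2(2-s)X^{s}$ and $\cos\pi s$ into $-\cos\pi s$, so the result is not $I_j(a-1;X)$.

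The Bessel side confirms the failure. With $z=2X^{1/2}$ and $a=1$,
\[
X\tfrac{d}{dX}\bigl(2K_2(z)\bigr)=zK_2'(z)=-zK_1(z)-2K_2(z),
\]
whereas $I_1(0;X)=-zK_1(z)$. The recurrences you quote, $(z^{-\nu}K_\nu)'=-z^{-\nu}K_{\nu+1}$ and its $Y$-analogue, raise the order. They also do not apply to $(z/2)^{1-a}K_{a+1}(z)$, which is not of the form $z^{-\nu}K_\nu(z)$.

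\textbf{How to repair it.} Use $X^{1-a}\frac{d}{dX}X^{a}=X\frac{d}{dX}+a$ instead. It multiplies the integrand by $s+a$, genuinely removing the pole at $s=-a$, so
\[
\tfrac{d}{dX}\bigl(X^{a}I_j(a;X)\bigr)=X^{a-1}I_j(a-1;X).
\]
On the Bessel side, $X^{a}\cdot X^{(1-a)/2}K_{a+1}(2X^{1/2})=(z/2)^{a+1}K_{a+1}(z)$, and likewise for $Y$. The order-lowering relations $(z^{\nu}K_\nu)'=-z^{\nu}K_{\nu-1}$ and $(z^{\nu}Y_\nu)'=z^{\nu}Y_{\nu-1}$ then give the same relation for the right-hand sides; the minus sign produces the factor $(-1)^{a+1}$.

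The two sides now differ by $CX^{-a}$, not by an additive constant, so you must match the coefficient of $X^{-a}$ as $X\to0^+$. To do this, shift the middle segment of $C_a$ to the \emph{right}, to $\Re s=c\in(0,1)$; it already lies to the left of $0,-1,\dots,-a$, so shifting left makes no sense. This gives
\[
I_j(a;X)=-\sum_{k=0}^{a}\operatorname{Res}_{s=-k}+O(X^{c}).
\]
The term from $s=-a$ is $(-1)^{a+1}a!\,X^{-a}$ for $I_1$ and $-a!\,X^{-a}$ for $I_2$. These match $K_\nu(z)\sim\frac12\Gamma(\nu)(z/2)^{-\nu}$ and $Y_\nu(z)\sim-\frac1\pi\Gamma(\nu)(z/2)^{-\nu}$, so $C=0$.

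\textbf{The convergence worry is not a real obstacle.} All poles are real, and the horizontal pieces at height $\pm U$ tend to $0$, so by Cauchy's theorem the integrals do not depend on $\sigma_1\ge-a/2$. You may take $\sigma_1$ large, which makes the integrals in your inductive step absolutely convergent: the differentiated ones and the $O(X^{c})$ remainder. Lemma~\ref{expintegrallemma} is therefore unnecessary there.

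With these corrections your route needs only the two Mellin pairs at $a=0$ and the leading singular terms of $K_{a+1}$ and $Y_{a+1}$. The residue route instead needs their full logarithmic ascending series.
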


We shall require the following two lemmas of Tsang in the proof of Theorem~\ref{th1}.

\begin{lem}\cite[Lemma 3]{KT}
\label{lem1}
    If $n,m,k,l$ are natural numbers such that $\sqrt{n}+\sqrt{m}-\sqrt{k}-\sqrt{l}\neq 0$ or $\sqrt{n}+\sqrt{m}+\sqrt{k}-\sqrt{l}\neq 0$, then respectively,
    \begin{align}
        |\sqrt{n}+\sqrt{m}-\sqrt{k}-\sqrt{l}|\hspace{3mm} \text{or}\hspace{3mm} |\sqrt{n}+\sqrt{m}+\sqrt{k}-\sqrt{l}|\gg \max (n,m,k,l)^{-\frac72}.
    \end{align}
\end{lem}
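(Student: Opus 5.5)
The plan is to prove both bounds by the same device. First I multiply the nonzero quantity by a suitable positive "partner" of size $O(\sqrt{M})$, where $M=\max(n,m,k,l)$. The resulting product should be an algebraic integer $\beta$ lying in a field of degree at most $4$, with all Galois conjugates $O(M)$. Since $\beta\neq 0$, its norm is a nonzero rational integer, so $|\beta|\gg M^{-3}$. Dividing back by the partner then gives $|\alpha|\gg M^{-3}\cdot M^{-1/2}=M^{-7/2}$.

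\emph{First case.} Let $\alpha=\sqrt{n}+\sqrt{m}-\sqrt{k}-\sqrt{l}\neq 0$, and put $u=\sqrt n+\sqrt m$, $v=\sqrt k+\sqrt l$, so that $u+v>0$. Then
\begin{align}
\beta:=\alpha(u+v)=u^2-v^2=(n+m-k-l)+2\sqrt{nm}-2\sqrt{kl}.
\end{align}
This $\beta$ is a nonzero algebraic integer in $K=\mathbb{Q}(\sqrt{nm},\sqrt{kl})$, and $[K:\mathbb{Q}]\le 4$. Every embedding of $K$ sends $\sqrt{nm}\mapsto\pm\sqrt{nm}$ and $\sqrt{kl}\mapsto\pm\sqrt{kl}$. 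Hence each conjugate of $\beta$ has the form $A\pm2\sqrt{nm}\pm2\sqrt{kl}$ with $A=n+m-k-l$, and so has absolute value at most $6M$. The norm $N_{K/\mathbb{Q}}(\beta)$ is a product of $[K:\mathbb{Q}]$ nonzero conjugates; it is rational and an algebraic integer, so $|N_{K/\mathbb{Q}}(\beta)|\ge1$. Isolating the factor $\beta$ itself gives $|\beta|\ge(6M)^{-3}$. Finally, $u+v\le4\sqrt M$, so $|\alpha|=|\beta|/(u+v)\gg M^{-7/2}$.

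\emph{Second case.} Let $\alpha=\sqrt n+\sqrt m+\sqrt k-\sqrt l\neq0$, and take the partner $w=\sqrt n+\sqrt m+\sqrt k+\sqrt l$, which satisfies $0<w\le4\sqrt M$. Then
\begin{align}
\beta:=\alpha w=(n+m+k-l)+2\sqrt{nm}+2\sqrt{nk}+2\sqrt{mk}.
\end{align}
This lies in $L=\mathbb{Q}(\sqrt{nm},\sqrt{nk},\sqrt{mk})$. Because $\sqrt{nm}\sqrt{nk}=n\sqrt{mk}$, we have $L=\mathbb{Q}(\sqrt{nm},\sqrt{nk})$, so $[L:\mathbb{Q}]\le4$. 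Each embedding flips the signs of $\sqrt{nm}$ and $\sqrt{nk}$ independently, and the sign of $\sqrt{mk}$ is then forced. So every conjugate is bounded by $|n+m+k-l|+6M\le 9M$. The norm argument from the first case again gives $|\beta|\ge(9M)^{-3}$, and therefore $|\alpha|\ge|\beta|/(4\sqrt M)\gg M^{-7/2}$.

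\emph{Main obstacle.} The only delicate point is getting the exponent $7/2$ rather than something weaker. Applying the norm directly to $\alpha$ in $\mathbb{Q}(\sqrt n,\sqrt m,\sqrt k,\sqrt l)$ has degree up to $16$ and gives only $M^{-15/2}$. The key step is therefore choosing the multiplier so that $\beta$ is rational up to terms involving products of only two square roots, which drops the degree to $4$. One should also check that the degree being smaller than $4$, as happens when some products are perfect squares, only improves the bound.
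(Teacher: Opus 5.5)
Your proof is correct and complete. The paper gives no proof of this lemma; it quotes it from Tsang~\cite{KT}. So the natural comparison is with the standard argument for bounds of this type.

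That argument takes the product $\Pi$ of the eight numbers $\sqrt{n}\pm\sqrt{m}\pm\sqrt{k}\pm\sqrt{l}$, with the sign of $\sqrt{n}$ fixed. $\Pi$ is even in each square root, so it is an integer polynomial in $n,m,k,l$. If $\Pi\neq 0$, dividing by the seven other factors, each at most $4\sqrt{M}$, gives $|\alpha|\geq(4\sqrt{M})^{-7}$.

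Your argument is a repackaging of this. In the first case, $A+2\epsilon_1\sqrt{nm}+2\epsilon_2\sqrt{kl}=(\sqrt{n}+\epsilon_1\sqrt{m})^2-(\sqrt{k}-\epsilon_2\sqrt{l})^2$. In the second case, each conjugate of $\beta$ is $(\sqrt{n}+\epsilon_1\sqrt{m}+\epsilon_2\sqrt{k})^2-l$. Hence your norm is exactly $\Pi$ whenever the field has degree $4$, and your multiplier $u+v$ (resp.\ $w$) is one of the eight factors.

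What your version buys is that non-vanishing comes for free. You take the norm in the field that actually contains $\beta$, where embeddings are injective. The eightfold product, by contrast, can vanish while $\alpha\neq0$. For example, if $n=m\neq k=l$, then $\sqrt{n}-\sqrt{m}+\sqrt{k}-\sqrt{l}=0$ but $\alpha=2(\sqrt{n}-\sqrt{k})\neq 0$. The product route therefore needs a separate treatment of such degenerate configurations, where $\alpha$ collapses to a shorter signed sum of square roots that obeys a stronger bound.

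The only imprecision in your write-up is the word ``independently'' in the second case. When $[L:\mathbb{Q}]<4$, not every sign pair is realized by an embedding. You only use that each conjugate has the displayed form, so nothing changes.
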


\begin{lem} \cite[Lemma 4]{KT} \label{lem2}
    Let $\alpha$, $\beta$, and $\delta$ be real numbers such that $\alpha \neq 0$ and $0 < \delta < \frac{1}{2}$.
    Then 
    $$\# \{K<k<2K:||\beta+\alpha\sqrt{k}||<\delta\}\ll K\delta+|\alpha|^{\frac13}K^{\frac12}+|\alpha|^{-\frac12}K^{\frac34},$$
    where the implied constant is absolute. Here, the notation $||\cdot||$ denotes distance to the nearest integer.
\end{lem}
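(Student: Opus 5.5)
The plan is to bound the count by a discrepancy estimate and control the resulting Weyl sums with van der Corput's second-derivative test. Set $f(k)=\beta+\alpha\sqrt{k}$ and $\mathcal{N}=\#\{K<k<2K:\|f(k)\|<\delta\}$. By the Erdős--Turán inequality (or a Vaaler/Selberg majorant of the indicator of $[-\delta,\delta]$ modulo $1$), for every integer $H\geq1$ we have
\begin{align}
\mathcal{N}\ll K\delta+\frac{K}{H}+\sum_{h=1}^{H}\frac{1}{h}\left|S_h\right|,\qquad S_h=\sum_{K<k<2K}e^{2\pi i h\alpha\sqrt{k}},
\end{align}
with absolute implied constant. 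The phase $\beta$ is absorbed into $|S_h|$, which is why the final bound is independent of $\beta$.

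Next I estimate $S_h$ by van der Corput's second-derivative test. Put $g(u)=h\alpha\sqrt{u}$ on $[K,2K]$. Then $|g''(u)|=\tfrac14 h|\alpha|u^{-3/2}\asymp h|\alpha|K^{-3/2}=:\lambda$ throughout the interval, so
\begin{align}
S_h\ll K\lambda^{1/2}+\lambda^{-1/2}\ll (h|\alpha|)^{1/2}K^{1/4}+(h|\alpha|)^{-1/2}K^{3/4}.
\end{align}
Inserting this into the Erdős--Turán sum gives
\begin{align}
\sum_{h\le H}\frac{1}{h}\left|S_h\right|\ll |\alpha|^{1/2}K^{1/4}\sum_{h\le H}h^{-1/2}+|\alpha|^{-1/2}K^{3/4}\sum_{h\geq1}h^{-3/2}\ll |\alpha|^{1/2}K^{1/4}H^{1/2}+|\alpha|^{-1/2}K^{3/4}.
\end{align}
The second piece is already the term $|\alpha|^{-1/2}K^{3/4}$ in the lemma.

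It remains to choose $H$. Balancing $K/H$ against $|\alpha|^{1/2}K^{1/4}H^{1/2}$ suggests $H\asymp K^{1/2}|\alpha|^{-1/3}$, for which both terms are $\asymp|\alpha|^{1/3}K^{1/2}$. We take $H=\lfloor K^{1/2}|\alpha|^{-1/3}\rfloor$ when this is at least $1$; this choice is legitimate because $\alpha\neq0$. If instead $K^{1/2}|\alpha|^{-1/3}<1$, then $|\alpha|^{1/3}K^{1/2}>K$, and the trivial bound $\mathcal{N}\le K$ already suffices. Combining the three pieces yields $\mathcal{N}\ll K\delta+|\alpha|^{1/3}K^{1/2}+|\alpha|^{-1/2}K^{3/4}$.

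The only genuinely delicate point is making sure the second-derivative test is applied with uniform constants. This holds because $g''$ has constant sign and size $\asymp\lambda$ on the dyadic interval $[K,2K]$. The hypothesis $\delta<\tfrac12$ ensures the target set is a proper arc, so the majorant form of Erdős--Turán applies. No first-derivative (Kusmin--Landau) refinement is needed, since we only aim for the stated bound.
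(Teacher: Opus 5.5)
Your proof is correct and complete. The paper does not prove this lemma: it quotes it from Tsang \cite{KT} and uses it only as a black box in the estimation of $W_1$, so there is no internal argument to compare yours with. Your route is the standard derivation of exactly this bound: the Erd\H{o}s--Tur\'an inequality with parameter $H$, then van der Corput's second-derivative test giving $S_h\ll (h|\alpha|)^{1/2}K^{1/4}+(h|\alpha|)^{-1/2}K^{3/4}$, then the choice $H\asymp K^{1/2}|\alpha|^{-1/3}$, with the trivial bound when this is $<1$. Each of the three terms in the lemma is accounted for, and $\beta$ correctly drops out through $|S_h|$.

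The only unstated housekeeping is that both the second-derivative test and the count $\#\{K<k<2K\}\ll K$ need $K\ge 1$. For $K<1$ the set has at most one element. It is empty unless $K>\tfrac12$, and in that case $|\alpha|^{1/3}K^{1/2}+|\alpha|^{-1/2}K^{3/4}\gg 1$, so the bound holds trivially.
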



\section{Proof of Theorem \ref{truncatedformula}}\label{Vorr}
 Let $s=\sigma+it\in\mathbb{C}$. Since $\sigma_a=\frac{\delta_0}{2}+\frac{r}{2}+\frac 14>0$, therefore $\sigma_a>0$. Fix $c=\sigma_a+\varepsilon$ and
\begin{align}\label{Tdefn}
    T=\sqrt{C_{\varphi}x\left(N+\frac 12\right)}.
\end{align}
 Since $b(n)\ll n^{\varepsilon}$ for all $n\in\mathbb{N}$, it follows from Lemma~\ref{Perron} that
\begin{align}
     \sideset{}'\sum_{n\leq  x}  b (n)=&\frac{1}{2\pi i} \int_{c-iT}^{c+iT}   \frac{\varphi(s)x^{s}}{s  } ds+ E_0(x,T),\label{rsi}
 \end{align}
where  the error  $E_0(x,T)$ satisfies
 \begin{align} 
     E_0(x,T)\ll  \frac{x^{c+\varepsilon}\log x}{T }\ll \frac{x^{\frac{\delta_0}{2}+\frac{r}{2}+\frac14+\varepsilon} }{(Nx)^{  \frac12 }}\ll x^{\frac{\delta_0}{2}+\frac{r}{2}-\frac14+\varepsilon} N^{-\frac12} \label{erho new}.
 \end{align}
 To compute the integral on the right-hand side of \eqref{rsi}, we consider a positively oriented rectangular contour $\mathcal{S}$ with vertices $c-iT$, $-a_{\varepsilon}-iT$, $-a_{\varepsilon}+iT$ and $c+iT$, where $a_{\varepsilon}=\sigma_{a}-r+\varepsilon$ is a fixed real. Since the residues from the integral contribute towards the main term, we obtain that the error term is
 \begin{align}
     D_{0}(x)=\frac{1}{2\pi i} \left(\int_{-a_{\varepsilon}-iT}^{-a_{\varepsilon}+iT}+\int_{-a_{\varepsilon}+ iT}^{c+ iT}+\int_{c-iT}^{-a_{\varepsilon}-iT}\right) 
    \frac{\varphi(s)x^{s}}{s  } ds +E_0(x,T).\label{t:1}
 \end{align}
Employing \eqref{fnal} with $\Delta(s)=\Gamma(s)$, the Phragm\'{e}n-Lindel\"{o}f theorem \cite[pp. 66, \S 33]{Rademacher} gives
\[
\varphi(s)
\ll
\left(\varphi(c) + \varphi(a_{\varepsilon}+r)\right)\, t^{\frac{(2a_{\varepsilon}+r)(c-\sigma)}{c+a_{\varepsilon}}},
\]
for $t\gg 1$ and $-a_{\varepsilon}\leq\sigma\leq c$. Using this bound in \eqref{t:1}, we obtain that
\begin{align}
\int_{-a_{\varepsilon}\pm iT}^{c\pm iT}     \frac{\varphi(s)x^{s}}{s } ds&\ll \left(\varphi(c) + \psi(a_{\varepsilon}+r)\right)~ T^{\frac{(2a_{\varepsilon}+r)c}{c+a_{\varepsilon}}-1}\max\left\{\frac{x^{c}}{T^\frac{c(2a_{\varepsilon}+r)}{c+a_{\varepsilon}}},\frac{x^{-a_{\varepsilon}}}{T^{\frac{-a_{\varepsilon}(2a_{\varepsilon}+r)}{c+a_{\varepsilon}}}}\right\} \nonumber \\
&\ll \frac{x^{c}}{T}+x^{-a_{\varepsilon}}T^{2a_{\varepsilon}+r-1}=:E_1(x,T).\nonumber 
\end{align}
Then using \eqref{Tdefn} and $a_{\varepsilon}=\frac{\delta_0}{2}-\frac{r}{2}+\frac14+\varepsilon$, 
\begin{align}\label{erre1}
E_1(x,T)\ll x^{\frac{\delta_0}{2}+\frac{r}{2}-\frac14+\varepsilon} N^{-\frac12}  +  x^{\frac{r}{2}-\frac12+\varepsilon} N^{\frac12(\delta_0-\frac12)+\varepsilon}. 
\end{align}
Combining \eqref{rsi}, \eqref{erho new} and \eqref{erre1}, and using \eqref{fnal} with $\Delta(s)=\Gamma(s)$, we deduce that
\begin{align}
  D_{0}(x) &:= \frac{1}{2\pi i}  \int_{-a_{\varepsilon}-iT}^{-a_{\varepsilon}+iT} 
    \frac{\Gamma(r-s)\varphi(r-s)x^{s}}{\Gamma(s)s } ds+E_1(x,T)+E_0(x,T) \nonumber\\
    &=  \sum_{n=1}^{\infty}\frac{b(n)}{ (C_{\varphi}n)^{r}} \underbrace{ \frac{1}{2\pi i}  \int_{-a_{\varepsilon}-iT}^{-a_{\varepsilon}+iT} 
    \frac{\Gamma(r-s)(C_{\varphi}nx)^{s}}{\Gamma(s)s } ds}_{=:~I_{a_{\varepsilon}}(n)}+E_1(x,T)+E_0(x,T),\label{t:2}
 \end{align}
where in the last step we invert the order of summation and integration by absolute convergence on the
right-hand side.
 Next, we split the above sum in two parts and rewrite it as
 \begin{align}
   D_{0}(x)=C_{\varphi}^{-r}\left(  \sum_{n=1}^{N}\frac{b(n)}{ n^{r}} I_{a_{\varepsilon}}(n)+  \underbrace{\sum_{n=N+1}^{\infty}\frac{b(n)}{ n^{r}} I_{a_{\varepsilon}} (n)}_{=:E_2(x,T)}\right)+E_1(x,T)+E_0(x,T).\label{imst}
\end{align}
Let us first estimate the second sum. For $n\geq N+1$, we write the integral $I_{a_{\varepsilon}}(n)$ in the form
\begin{align}
(C_{\varphi}nx)^{a_{\varepsilon}}I_{a_{\varepsilon}}(n)&= \frac{1}{2\pi }\left( \int_{-d}^{d}+\int_{d}^T+\int_{-T}^{-d}\right)   f(-a_{\varepsilon}+it)\frac{(C_{\varphi}nx)^{it}}{(-{a_{\varepsilon}}+it) } dt\\
&=: I_{a_{\varepsilon}}^{(1)}(n)+I_{a_{\varepsilon}}^{(2)}(n)+I_{a_{\varepsilon}}^{(3)}(n),\label{I_a branching}
\end{align}
where $d=\max\{1,|a_{\varepsilon}|\}$ and $f(s)=\frac{\Gamma(r-s)}{\Gamma(s)}$.
It can be easily seen that 
for the vertical segment $|t|\leq d$, we have
\begin{align}
    I_{a_{\varepsilon}}^{(1)}(n)\ll 1. \label{I_a1bound}
\end{align}
Next, we estimate $I_{a_{\varepsilon}}^{(2)}(n)$. By Lemma \ref{Gamma}, we have
\begin{align}
  I_{a_{\varepsilon}}^{(2)}(n) &=\frac{e^{-\frac{i\pi r}{2}}}{2\pi}\int_{d}^T e^{2it (-\log t +1+\frac12 \log (C_{\varphi}nx))}   \{ t^{r-1+2a_{\varepsilon}}+O(t^{r-2+2a_{\varepsilon}})\}dt\nonumber\\
  &=\frac{e^{-\frac{i\pi r}{2}}}{2\pi}\int_{d}^T e^{2it (-\log t +1+\frac12 \log (C_{\varphi}nx))}   \left\{ t^{\delta_0-\frac12+\varepsilon}+O\left(t^{\delta_0-\frac 32+\varepsilon}\right)\right\}dt,\label{I_a^2exp}
\end{align}
where we have used the fact that
\begin{align}
\frac{1}{-a_{\varepsilon}+it}=-\frac{i}{t}+O\left(\frac{1}{t^2}\right).    
\end{align}
Since $\delta_0\leq \frac12$, we  use integration by parts in \eqref{I_a^2exp} to obtain
\begin{align}
   I_{a_{\varepsilon}}^{(2)}(n) =&\left(t^{\delta_0-\frac12+\varepsilon} \frac{e^{-\frac{i\pi r}{2}}}{2\pi}\int_{d}^t e^{2iu (-\log u +1+\frac12 \log C_{\varphi}nx)} du\right)\Bigg|_{d}^{T}\\
   &-\frac{e^{-\frac{i\pi r}{2}}}{2\pi}\left(\delta_0-\frac 12+\varepsilon\right)\left(\int_d^tu^{\delta_0-\frac 32+\varepsilon}\left(\int_d^ue^{2iw\left(-\log w+1+\frac 12\log (C_{\varphi}nx)\right)}dw\right)du\right)\Bigg|_d^T \\
   &+O\left(\int_d^T t^{\delta_0-\frac32+\varepsilon}dt\right)\nonumber.
   \end{align}
Applying Lemma \ref{expintegrallemma} to the inner integral in the second term on the right-hand side of the  above equation  with 
$$F(w)=2w\left(-\log w+1+\frac 12\log (C_{\varphi}nx)\right) \ \ \text{and} \ \ G(w)\equiv 1,$$
since $\frac{G(w)}{F'(w)}$ is monotonically increasing for $d\leq w\leq t\leq T$, we get that
\begin{align}
    \left|\frac{F'(w)}{G(w)}\right|=\log\left(\frac{C_{\varphi}nx}{w^2}\right)\geq \log\left(\frac{n}{N+\frac 12}\right).
\end{align}
This gives
   \begin{align}
   I_{a_{\varepsilon}}^{(2)}(n)&=  \left(t^{\delta_0-\frac12+\varepsilon} \frac{e^{-\frac{i\pi r}{2}}}{2\pi}\int_{d}^t e^{2iu (-\log u +1+\frac12 \log C_{\varphi}nx)} du\right)\Bigg|_{d}^{T}\\
   &\hspace{4cm}+O\left(\left(1+\frac{1}{\log\left(\frac{n}{N+\frac12}\right)}\right) \int_{d}^T t^{\delta_0-\frac32+\varepsilon}dt\right)\nonumber.
\end{align}
Again, applying Lemma \ref{expintegrallemma} to the integral in the main term above, we end up getting
\begin{align}
   I_{a_{\varepsilon}}^{(2)}(n)\ll
   \begin{cases}
       1+\frac{1}{\log\left(\frac{n}{N+\frac 12}\right)} &\text{if }\ \delta_0<\frac 12\\
       T^{\varepsilon}\left(1+\frac{1}{\log\left(\frac{n}{N+\frac 12}\right)}\right) &\text{if } \ \delta_0=\frac 12.\label{i2}
       \end{cases}
\end{align}
Same estimate holds for $I_{a_{\varepsilon}}^{(3)}(n)$ as well. The bounds in \eqref{I_a1bound} and \eqref{i2} together with \eqref{I_a branching} give
\begin{align}
\sum_{n=N+1}^{\infty}\frac{b(n)}{ n^{r}} I_{a_{\varepsilon}}(n) &
\ll \begin{cases}
x^{-{a_{\varepsilon}}}\left(1+  \sum_{n=N+1}^{\infty} \frac{|b(n)|}{n^{r+a_{\varepsilon}}}\left(1+O\left(\frac{1}{\log n}\right)\right)\right)&\text{if }\ \delta_0<\frac 12\\
    x^{-{a_{\varepsilon}}}\left(1+  T^{\varepsilon}\sum_{n=N+1}^{\infty} \frac{|b(n)|}{n^{r+a_{\varepsilon}}}\left(1+O\left(\frac{1}{\log n}\right)\right)\right) &\text{if }\ \delta_0=\frac 12.
    \end{cases}\nonumber
    \end{align}
    By simple computation, we obtain
    \begin{align}
\sum_{n=N+1}^{\infty}\frac{b(n)}{ n^{r}} I_{a_{\varepsilon}}(n)\ll \begin{cases}
    x^{-{a_{\varepsilon}}}&\text{if }\ \delta_0<\frac 12\\
    x^{-{a_{\varepsilon}}}T^{\varepsilon} &\text{if }\ \delta_0=\frac 12.
\end{cases}
\end{align}
Upon substituting the value of $a_{\varepsilon}$ and $T$ from \eqref{Tdefn}, we deduce that
\begin{align} \label{bound0}
E_2(x, T)\ll \begin{cases}
    x^{-\frac{\delta_0}{2}+\frac r2-\frac 14-\varepsilon}&\text{if }\ \delta_0<\frac 12\\
    x^{-\frac{\delta_0}{2}+\frac r2-\frac 14+\varepsilon}N^{\varepsilon}&\text{if }\ \delta_0=\frac 12.
\end{cases}
\end{align}

Now, we compute the first sum in the right-hand side of \eqref{imst}. We consider a contour $\mathcal{C}$ with counterclockwise orientation with vertices  $-a_{\varepsilon}-iT$, $-a_{\varepsilon}+iT$, $-a_{\varepsilon}+2\varepsilon+iT$  and $-a_{\varepsilon}+2\varepsilon-iT$. Since $\frac{\Gamma(r-s)}{s\Gamma(s)}$ is holomorphic in this region, no pole is encountered. Hence, 
\begin{align}
 I_{a_{\varepsilon}}(n)&=  \frac{1}{2\pi i}  \int_{-a_{\varepsilon}+2\varepsilon-iT}^{-a_{\varepsilon}+2\varepsilon+iT} 
    \frac{\Gamma(r-w)(C_{\varphi}nx)^{w}}{ w\Gamma(w) } dw+O\left(T^{r-1}\int_{-a_{\varepsilon}}^{-a_{\varepsilon}+2\varepsilon}\left(\frac{C_{\varphi}nx}{T^2}\right)^{\sigma}d\sigma\right) \nonumber \\
    &=\frac{1}{2\pi i}  \int_{-a_{\varepsilon}+2\varepsilon-iT}^{-a_{\varepsilon}+2\varepsilon+iT} 
    \frac{\Gamma(r-w)(C_{\varphi}nx)^{w}}{ \Gamma(w+1) } dw\\
    &\hspace{4cm}+O\left(T^{r-1}\max\left\{\left(\frac{C_{\varphi}nx}{T^2}\right)^{-a_{\varepsilon}},\left(\frac{C_{\varphi}nx}{T^2}\right)^{-a_{\varepsilon}+2\varepsilon}\right\}\right)\nonumber \\
     &=\frac{1}{2\pi i}  \int_{-a_{\varepsilon}+2\varepsilon-iT}^{-a_{\varepsilon}+2\varepsilon+iT} 
    \frac{\Gamma(r-w)(C_{\varphi}nx)^{w}}{ \Gamma(w+1) } dw+O\left(T^{r-1} \left(\frac{n}{(N+\frac12)}\right)^{-a_{\varepsilon}}\right),  \label{Ian}
   \end{align}
where the last step follows by the fact that $n\leq N$. Substituting \eqref{bound0} and \eqref{Ian} in \eqref{imst}, we get
 \begin{align}\label{MEN}
    D_{0}(x)&=C_{\varphi}^{-r}\left(  \sum_{n=1}^{N}\frac{b(n)}{ n^{r}} \underbrace{\frac{1}{2\pi i}  \int_{-a_{\varepsilon}+2\varepsilon-iT}^{-a_{\varepsilon}+2\varepsilon+iT} 
    \frac{\Gamma(r-w)(C_{\varphi}nx)^{w}}{\Gamma(w+1) } dw}_{L_{a_{\varepsilon}}(n)}\right)\\
    &\hspace{3cm}+\begin{cases}
        O\left(x^{\frac{\delta_0}{2}+\frac{r}{2}-\frac14+\varepsilon} N^{-\frac12}  +  x^{-\frac{\delta_0}{2}+\frac r2-\frac 14-\varepsilon}\right)&\text{if }\ \delta_0<\frac 12\\
        O\left(x^{\frac{\delta_0}{2}+\frac{r}{2}-\frac14+\varepsilon} N^{-\frac12}  +  x^{-\frac{\delta_0}{2}+\frac r2-\frac 14+\varepsilon}N^{\varepsilon}\right)&\text{if }\ \delta_0=\frac 12.
    \end{cases}
 \end{align}
 It remains to estimate 
 \begin{align}\label{J_a} 
 L_{a_{\varepsilon}}(n)&= \frac{1}{2\pi i}  \int_{(-a_{\varepsilon}+2\varepsilon)} \frac{\Gamma(r-w)(C_{\varphi}nx)^{w}}{ \Gamma(w+1) } dw-\frac{1}{2\pi }  \int_{|t|>T}  \frac{\Gamma(r+a_{\varepsilon}-2\varepsilon-it)(C_{\varphi}nx)^{-a_{\varepsilon}+2\varepsilon+it} }{ \Gamma(1-a_{\varepsilon}+2\varepsilon+it)}dt.
 \end{align}
 Again, using Lemma \ref{Gamma} in the second integral above, we can rewrite it as
\begin{align}
  \frac{e^{\pm\frac{i\pi r}{2}}}{2\pi}(C_{\varphi}nx)^{-a_{\varepsilon}+2\varepsilon}\int_{|t|>T} e^{\pm 2it (-\log t +1+\frac12 \log (C_{\varphi}nx))}   \left\{ t^{\delta_0-\frac 12-2\varepsilon}+O(t^{\delta_0-\frac 32-2\varepsilon})\right\}dt\nonumber.
\end{align}
Now, we apply Lemma \ref{expintegrallemma} with
\begin{align}
 F(t)=\pm 2t  \left( -\log t +1+\frac12 \log (C_{\varphi}nx) \right) \ \, \text{and}\,\,  G(t)=t^{\delta_0-\frac12-2\varepsilon},   
\end{align}
as $G(t)/F^{\prime}(t)$ is monotonic for $|t|>T$. Since $\delta_0\leq \frac 12$ and $n\leq N$, we have
\begin{align}
    \Bigg|\frac{F'(t)}{G(t)}\Bigg|=\frac{\log\left(\frac{t^2}{C_{\varphi}nx}\right)}{\left|t^{\delta_0-\frac 12-2\varepsilon}\right|}\geq \frac{\log\left(\frac{N+\frac 12}{n}\right)}{T^{\delta_0-\frac 12-2\varepsilon}}.
\end{align}
Therefore,
\begin{align}
\int_{|t|>T}  \frac{\Gamma(r+a_{\varepsilon}-2\varepsilon-it)(C_{\varphi}nx)^{-a_{\varepsilon}+2\varepsilon+it} }{ \Gamma(1-a_{\varepsilon}+2\varepsilon+it)}dt&\ll T^{\delta_0-\frac12-2\varepsilon} (C_{\varphi}nx)^{-a_{\varepsilon}+2\varepsilon}  \left( \frac{1}{\left(\log \frac{N+\frac12}{n}\right)}+1 \right)\\
&\ll T^{\delta_0-\frac12-2\varepsilon} (C_{\varphi}nx)^{-a_{\varepsilon}+2\varepsilon}.
\end{align} 
Substituting the above bound in \eqref{J_a}, we obtain
 \begin{align}\label{J_a1} 
 L_{a_{\varepsilon}}(n)&= \frac{1}{2\pi i}  \int_{(-a_{\varepsilon}+2\varepsilon)} \frac{\Gamma(r-w)(C_{\varphi}nx)^{w}}{ \Gamma(w)~w} dw+O\left(T^{\delta_0-\frac12-2\varepsilon} (C_{\varphi}nx)^{-a_{\varepsilon}+2\varepsilon} \right).
 \end{align}
 Subsequently, since $0\leq \delta_0\leq \frac 12$, we make use of the identity \cite[Eq. (2.12)]{Berndt3} involving the Bessel function $J_{\nu}(x)$ of nonnegative integral order $\nu$ to get
  \begin{align}
  L_{a_{\varepsilon}}(n)&=   (C_{\varphi}nx)^{\frac{r}{2}}J_r(2\sqrt{C_{\varphi}nx})+O\left(T^{\delta_0-\frac12-2\varepsilon} (C_{\varphi}nx)^{-a_{\varepsilon}+2\varepsilon} \right).
\end{align}
Upon substituting $L_{a_\varepsilon}(n)$ in \eqref{MEN}, we deduce that
\begin{align}
   D_{0}(x)=C_{\varphi}^{-\frac{r}{2}} x^{\frac{r}{2}}\sum_{n=1}^{N}\frac{b(n)}{ n^{\frac{r}{2}}}  &J_r(2\sqrt{C_{\varphi}nx})+\begin{cases}
        O\left(x^{\frac{\delta_0}{2}+\frac{r}{2}-\frac14+\varepsilon} N^{-\frac12}  +  x^{-\frac{\delta_0}{2}+\frac r2-\frac 14-\varepsilon}\right)&\text{if }\ \delta_0<\frac 12\\
        O\left(x^{\frac{\delta_0}{2}+\frac{r}{2}-\frac14+\varepsilon} N^{-\frac12}  +  x^{-\frac{\delta_0}{2}+\frac r2-\frac 14+\varepsilon}N^{\varepsilon}\right) &\text{if }\ \delta_0=\frac 12.
    \end{cases} \label{truncattedwithJ}
\end{align}
The classical asymptotic formula for the Bessel function \(J_\nu(x)\), as \(x\to \infty\) \cite[Eq. (1.3.15)]{Jutila}, is given by
\begin{align}
J_\nu(x)
=
\sqrt{\frac{2}{\pi x}}
\cos\left(
x-\frac{\nu\pi}{2}-\frac{\pi}{4}
\right)
+
O_\nu\!\left(x^{-\frac32}\right).
\label{eq:bessel-asymp}
\end{align}
Using this in \eqref{truncattedwithJ}, we deduce the formula \eqref{truncated1}. This completes our proof.

\begin{rem}
    The proofs of Theorems \ref{truncatedformula2} and \ref{truncatedformula3} follow the same argument as that of Theorem \ref{truncatedformula} and are therefore omitted. In the proof of Theorem \ref{truncatedformula2}, identity \cite[Eq. (2.12)]{Berndt3} is used when $p$ is a positive odd integer, whereas identities \eqref{I1int} and \eqref{I2int} are employed in all remaining cases.
\end{rem}

\section{Proof of Theorem \ref{th1}}\label{mainproof}
 In this section, we follow a construction method analogous to that in \cite{KT}. Here, we take $D_{0}$ and $r$ to be as in Theorem \ref{truncatedformula}. The proofs of the remaining two choices of $\Delta(s)$ follow the same line of reasoning.
 
Using the truncated Vorono\"{i} formula \eqref{truncated1}, we write $P(t)$ as
\begin{align}
P(t)=\frac{1}{\sqrt{\pi}}C_{\varphi}^{-\frac{r}{2}-\frac14} \sum_{n=1}^{N}&\frac{b(n)}{ n^{\frac{r}{2}+\frac14}}  \cos\left(
2\sqrt{C_{\varphi}n}~t-\frac{r \pi}{2}-\frac{\pi}{4}
\right)\\
&\hspace{0.7cm}+\begin{cases}
    O\left(t^{\delta_0+\varepsilon} N^{-\frac12}  +t^{-\delta_0-\varepsilon}  \right)&\text{if }\ \delta_0<\frac 12\\
    O\left(t^{\delta_0+\varepsilon} N^{-\frac12}  +t^{-\delta_0+\varepsilon}N^{\varepsilon}  \right)&\text{if }\ \delta_0=\frac 12 .\label{defi12}
\end{cases}    \end{align}
Let $J>1$. In view of the above formula, we define
\begin{align}
p_J(t):=\frac{1}{\sqrt{\pi}}C_{\varphi}^{-\frac{r}{2}-\frac14}\sum_{n=1}^{J}\frac{b(n)}{ n^{\frac{r}{2}+\frac14}}  \cos\left(
2\sqrt{C_{\varphi}n}~t-\frac{r \pi}{2}-\frac{\pi}{4}
\right) .
\end{align}
Let \(M\) be a sufficiently large positive real number, and assume that
\[
M\le t\le 2M.
\]
Further, let \(1<\theta<2\) and suppose that \(J<M^{\theta}\). Setting
\[
N=M^{\theta}
\]
in \eqref{defi12}, we define
\begin{align}
\mathcal{P}_1(t)&:=P(t)-p_J(t)
= S(t)+\begin{cases}
    O\!\left(M^{\delta_0-\frac{\theta}{2}+\varepsilon}+M^{-\delta_0-\varepsilon}\right)&\text{if }\ \delta_0<\frac 12\\
    O\!\left(M^{\delta_0-\frac{\theta}{2}+\varepsilon}+M^{-\delta_0+\varepsilon}\right)&\text{if }\ \delta_0=\frac 12,
\end{cases}
\end{align}
where 
\begin{align}
S(t):&=\frac{1}{\sqrt\pi}C_{\varphi}^{-\frac{r}{2}-\frac14} \sum_{J<n\le M^{\theta}}\frac{b(n)}{n^{\frac r2+\frac 14}}
\cos\!\left(2\sqrt{C_{\varphi}n}~t-\frac{r\pi}{2}-\frac{\pi}{4}\right).
\end{align}
Using the fact that $\delta_0\leq \frac12$ and $\theta>1$, we have 
\begin{align}
\mathcal{P}_1(t)=S(t)+\begin{cases}
    O\!\left(M^{-\frac12(\theta-1)+\varepsilon}+M^{-\delta_0-\varepsilon}\right)&\text{if }\ \delta_0<\frac 12\\
    O\!\left(M^{-\frac12(\theta-1)+\varepsilon}+M^{-\delta_0+\varepsilon}\right)&\text{if }\ \delta_0=\frac 12.\label{opp}
\end{cases}
\end{align}
Then since $(a+b)^4=a^4+O(|b|a^3+|b|^4)$, it follows from \eqref{opp} that if $\delta_0<\frac 12$,
\begin{align}
    \int_M^{2M}|\mathcal{P}_1(t)|^4dt&\leq \int_M^{2M}|S(t)|^4dt+O\left((M^{-\frac{1}{2}(\theta-1)+\varepsilon}+M^{-\delta_0-\varepsilon})\int_M^{2M}|S(t)|^3dt\right)\\
    &\qquad + O\left(\left( M^{-\frac{1}{2}(\theta-1)+\varepsilon}+M^{-\delta_0-\varepsilon}\right)^4M\right). \label{p1tpower4_case1}  
    \end{align}
    Likewise, if $\delta_0=\frac 12$, we get
    \begin{align}
    \int_M^{2M}|\mathcal{P}_1(t)|^4dt&\leq \int_M^{2M}|S(t)|^4dt+O\left((M^{-\frac{1}{2}(\theta-1)+\varepsilon}+M^{-\delta_0+\varepsilon})\int_M^{2M}|S(t)|^3dt\right)\\
    &\qquad + O\left(\left( M^{-\frac{1}{2}(\theta-1)+\varepsilon}+M^{-\delta_0+\varepsilon}\right)^4M\right). \label{p1tpower4_case2}  
    \end{align}
Let us first estimate $\int_M^{2M}|S(t)|^4dt$. Define
\begin{align}
f=f(n,m,k,l)
:=
 \frac{1}{\pi^2}C_{\varphi}^{-2r-1}(nmkl)^{-\frac{r}{2}-\frac14}
b(n)\overline{b(m)}b(k)\overline{b(l)},
\label{3.3}
\end{align}
for \(J<n,m,k,l\le M^{\theta}\), and \(f=0\) otherwise. We have 
\begin{align}
    |S(t)|^4=\sum f&\cos\left(2\sqrt{C_{\varphi}n}~t-\frac{r\pi}{2}-\frac{\pi}{4}\right)\cos\left(2\sqrt{C_{\varphi}m}~t-\frac{r\pi}{2}-\frac{\pi}{4}\right)\\
    & \hspace{2cm}\times\cos\left(2\sqrt{C_{\varphi}k}~t-\frac{r\pi}{2}-\frac{\pi}{4}\right)\cos\left(2\sqrt{C_{\varphi}l}~t-\frac{r\pi}{2}-\frac{\pi}{4}\right).
\end{align}
    An easy computation shows that
    \begin{align}
|S(t)|^4
=
S_1(t)+S_2(t)+S_3(t)+S_4(t)+S_5(t)+S_6(t)+S_7(t),
\label{3.4}
\end{align}
where
\begin{align}
S_1(t)&:=\frac38 \sum_{\sqrt n+\sqrt m= \sqrt k+\sqrt l}
f,\\
S_2(t)
&:=
\frac38
\sum_{\sqrt n+\sqrt m\ne \sqrt k+\sqrt l}
f
\cos\!\left(
2\sqrt{C_{\varphi}}(\sqrt n+\sqrt m-\sqrt k-\sqrt l)t
\right),
\nonumber\\
S_3(t)
&:=
\frac12
\cos (r\pi)\sum
f
\sin\!\left(
2\sqrt{C_{\varphi}}(\sqrt n+\sqrt m+\sqrt k-\sqrt l)t
\right),
\nonumber\\
S_4(t)
&:=
-\frac12
\sin (r\pi)\sum_{\sqrt{n}+\sqrt{m}+\sqrt{k}=\sqrt{l}}
f,
\nonumber\\
S_5(t)
&:=
-\frac12
\sin (r\pi)\sum_{\sqrt{n}+\sqrt{m}+\sqrt{k}\neq \sqrt{l}}
f
\cos\!\left(
2\sqrt{C_{\varphi}}(\sqrt n+\sqrt m+\sqrt k-\sqrt l)t
\right),
\nonumber\\
S_6(t)
&:=
 -\frac18 \cos (2r\pi)
\sum
f
\cos \!\left(
2\sqrt{C_{\varphi}}(\sqrt n+\sqrt m+\sqrt k+\sqrt l)t
\right),\\
S_7(t)
&:=
 -\frac18 \sin (2r\pi)
\sum
f
\sin\!\left(
2\sqrt{C_{\varphi}}(\sqrt n+\sqrt m+\sqrt k+\sqrt l)t
\right).
\end{align}

We will see that the estimation of $\int_M^{2M} S_2(t)dt$, $\int_M^{2M} S_3(t)dt$, and $\int_M^{2M} S_5(t)dt$ follows essentially the same argument. Likewise, the analysis for $\int_M^{2M} S_6(t)dt$ and $\int_M^{2M} S_7(t)dt$ are obtained by an analogous method. All distinct cases are treated separately in the subsections below.

\subsection{Estimation of $\int_M^{2M} S_1(t) dt$.}\label{ss1}
Note that for $n,m,k,l\in\mathbb{N}$, the relation
$$\sqrt{n}+\sqrt{m}=\sqrt{k}+\sqrt{l}$$
holds if and only if either $(n,m)=(k,l)$ or $(n,m)=(l,k)$, or else $n,m,k,l$ have the same square-free part. In the latter case, denoting this common square-free part by \(h\), we may write
\[
n=\alpha^2 h,\qquad
m=\beta^2 h,\qquad
k=\gamma^2 h,\qquad
l=\phi^2 h,
\]
with $\alpha+\beta=\gamma+\phi$.
Consequently,
\begin{align}
 S_1(t)&
 \ll 
\sum_{\substack{J<n,m\le M^{\theta}}} n^{-r-\frac{1}{2}} m^{-r-\frac{1}{2}} |b(n)|^2 |b(m)|^2 \\
&\hspace{3cm}+\sum_{\substack{h\ge1\\ \alpha,\beta,\gamma,\phi\ge1\\ \alpha^2 h>J}} (\alpha\beta\gamma\phi)^{-r-\frac{1}{2}} h^{-2r-1} |b(\alpha^2 h)|\,|b(\beta^2 h)|\,|b(\gamma^2 h)|\,|b(\phi^2 h)|.
\end{align}
Using the standard bound $b(n)\ll n^{\varepsilon}$, together with the assumption $r>\frac 12,$ we obtain
\begin{align}\label{S1final}
\int_M^{2M}S_1(t) dt
&\ll M
J^{1-2r+\varepsilon}
+
M\sum_{h\geq 1}h^{-2r-1+\varepsilon}\sum_{\alpha>\sqrt{\frac{J}{h}}}
\alpha^{-r-\frac{1}{2}+\varepsilon}\nonumber
\\
&\ll MJ^{1-2r+\varepsilon}+MJ^{\frac14-\frac{r}{2}+\varepsilon}
\\
&\ll MJ^{\frac14-\frac{r}{2}+\varepsilon}.
\end{align}

\subsection{Estimation of $\int_M^{2M} S_4(t) dt$.}
For $n,m,k,l\in\mathbb{N}$, the relation 
$$\sqrt{n}+\sqrt{m}+\sqrt{k}=\sqrt{l}$$
holds if and only if $n,m,k$ and $l$ all have the same square-free part, say $h$ such that $n=\alpha^2 h$, $m=\beta^2h$, $k=\gamma^2h$, $l=\phi^2 h$ and $\alpha+\beta+\gamma=\phi$. Hence,
\begin{align}
 S_4(t)&\ll \sum_{h\geq 1\atop{\alpha,\beta,\gamma,\phi\geq 1\atop{\alpha>\sqrt{\frac{J}{h}}}}}(\alpha\beta\gamma\phi)^{-r-\frac 12}h^{-2r-1}|b(\alpha^2h)||b(\beta^2h)||b(\gamma^2h)||b(\phi^2h)|\\
    &\ll \sum_{h\geq 1}h^{-2r-1+\varepsilon}\sum_{\alpha>\sqrt{\frac{J}{h}}}\alpha^{-r-\frac 12+\varepsilon}\\
    &\ll \sum_{h\geq 1}h^{-2r-1+\varepsilon}\left(\frac{J}{h}\right)^{\frac{1}{4}-\frac r2+\varepsilon}\ll J^{\frac{1}{4}-\frac r2+\varepsilon},
\end{align}
where the second step follows from $r > \frac{1}{2}$. Thus, 
\begin{align}
    \int_M^{2M}S_4(t)dt\ll MJ^{\frac{1}{4}-\frac r2+\varepsilon}.
\end{align}

\subsection{Estimation of $\int_{M}^{2M}S_2(t) dt$, $\int_{M}^{2M}S_3(t) dt$ and $\int_{M}^{2M}S_5(t) dt$}           
We estimate the integral
\[
\int_M^{2M}S_3(t)\,dt,
\]
since the corresponding estimates for the integrals involving \(S_2(t)\) and \(S_5(t)\) are obtained in an analogous manner.
Define
\begin{align}\label{POO1}
\Delta_1:=\sqrt{n}+\sqrt{m}+\sqrt{k}-\sqrt{l}.
\end{align}
Choose a real number \(c\) satisfying
\[
\frac{\theta}{2}<c<1.
\]

We distinguish two cases according to the size of \(\Delta_1\).  We use the trivial estimate
\[
\int_M^{2M}
\sin\!\left(2\sqrt{C_{\varphi}}\Delta_1 t\right)\,dt
\ll M.
\]
if $|\Delta_1|\le M^{-c}$. On the other hand, if $|\Delta_1|>M^{-c}$,
then, by the first derivative test,
\[
\int_M^{2M}
\sin\!\left(2\sqrt{C_{\varphi}}\Delta_1 t\right)\,dt
\ll |\Delta_1|^{-1}.
\]
Accordingly, we partition the integral $\int_M^{2M} S_3(t)\, dt$ into two sums determined by the magnitude of $\Delta_1$, so that
\begin{align}\label{S3division}
\int_M^{2M} S_3(t)\, dt
\ll 
\underbrace{M \sum_{0 < |\Delta_1| \le M^{-c}} |f|}_{W_1} 
+ 
\underbrace{\sum_{|\Delta_1| > M^{-c}} |f|\, |\Delta_1|^{-1}}_{W_2}.
\end{align}

First, consider $W_1$. Here, we may assume that $J<n \leq m \leq k \leq M^{\theta}$, since the sum is symmetric in $n, m,$ and $k$. In this case, the condition $|\Delta_1| \leq M^{-c}$ implies that 
\begin{align}
    \sqrt{l}& \leq  \sqrt{n} + \sqrt{m} + \sqrt{k} + |\Delta_1| \hspace{0.5cm} \text{(by using $-|\Delta_1| \leq \Delta_1$)}\\
    &\leq 4 \sqrt{k} \hspace{0.5cm} \text{(by using $n \leq m \leq k \leq M^{\theta}$ and $|\Delta_1| \leq M^{-c} \leq 1\leq \sqrt{k}$).}
\end{align}
This gives
\begin{align}
    |\Delta_1||\sqrt{n}+ \sqrt{m} + \sqrt{l} + \sqrt{k}| &= |\sqrt{n}+ \sqrt{m} + \sqrt{k} - \sqrt{l}||\sqrt{n}+ \sqrt{m} + \sqrt{k} + \sqrt{l}| \\
    &\leq 7 \sqrt{k} M^{-c}.
\end{align}
Hence,
\begin{equation}\label{eq1}
    \left|l - \left(n + m + k + 2(\sqrt{nm} + \sqrt{nk} + \sqrt{mk})\right)\right| \leq 7 \sqrt{k} M^{-c}. 
\end{equation}
Note that $7\sqrt{k} M^{-c}\leq 7M^{-(c-\frac{\theta}{2})}=o(1)$, since $\frac{\theta}{2}<c$. Also, there will be no natural number $l$ satisfying $0<|\Delta_1|\leq M^{-c}$ unless
\begin{align}\label{eq2}
    0<||2(\sqrt{nm} + \sqrt{nk} + \sqrt{mk})|| \leq 7 \sqrt{k} M^{-c}
\end{align}
holds true.
In view of \eqref{eq1} and \eqref{eq2}, there could be at most one eligible $l$ only. Moreover, such an $l$, if exists, must satisfy
$$l=m+n+k+\lfloor2\sqrt{nm}+2\sqrt{nk}+2\sqrt{mk}\rfloor.$$
From this, it is clear that $k\leq l\leq 9k$. Hence,
\begin{align}
    W_1&\ll M\sum_{J<n\leq m\leq k\leq M^{\theta} \atop{0<|\Delta_1|\leq M^{-c}}}|b(n)||\overline{b(m)}||b(k)||\overline{b(l)}|(nmkl)^{-(\frac{r}{2}+\frac14)}\\
    &\ll M^{1+\varepsilon}\sum_{J<n\leq m\leq k\leq M^{\theta} \atop{\eqref{eq2}}}|b(n)||b(m)| (nm)^{-(\frac{r}{2}+\frac14)}k^{-r-\frac12}.\label{eq2.5}
    \end{align}
Applying Lemma \ref{lem2} with $\alpha=2(\sqrt{n}+\sqrt{m})$, $\beta=2\sqrt{nm}$ and $\delta=7\sqrt{2K}M^{-c}$, we obtain
\begin{align}
    \sum_{K<k\leq 2K \atop{\eqref{eq2}}}k^{-r-\frac12} &\ll K^{-r-\frac12}\left(K^{\frac32}M^{-c}+K^{\frac12}m^{\frac16}+m^{-\frac14}K^{\frac34}\right)\\
    &=\left(K^{-r+1}M^{-c}+K^{-r}m^{\frac16}+K^{-r+\frac14}m^{-\frac14}\right).
\end{align}
Therefore, the sum in \eqref{eq2.5} is
\begin{align}
 &   \ll \sum_{J<n\leq m\leq 2K } |b(n)b(m)|(nm)^{-\frac{r}{2}-\frac14} \sum_{K<k\leq 2K \atop{\eqref{eq2}}} k^{-r-\frac12}\\
    &\ll \sum_{J<n\leq m\leq 2K }\left(K^{-r+1}M^{-c}+K^{-r}m^{\frac16}+K^{-r+\frac14}m^{-\frac14}\right)|b(n)b(m)|(nm)^{-\frac{r}{2}-\frac14} \\
   &= K^{-r+1}M^{-c}\sum_{J<n\leq m\leq 2K }|b(n)b(m)|(nm)^{-\frac{r}{2}-\frac14} +K^{-r}\sum_{J<n\leq m\leq 2K } |b(n)|n^{-\frac{r}{2}-\frac14} |b(m)|m^{-\frac{r}{2}-\frac14+\frac16}\\
   & \ \ \ \ \ +K^{-r+\frac14}\sum_{J<n\leq m\leq 2K } |b(n)b(m)|n^{-\frac{r}{2}-\frac14} m^{-\frac{r}{2}-\frac12}\\
    &\ll K^{-r+\delta_0+1+\varepsilon}M^{-c}+K^{-r+\delta_0+\frac16+\varepsilon}+K^{-r+\frac14} \underbrace{ \sum_{J<n\leq m \leq 2K} |b(n)| n^{-(\frac{r}{2}+\frac14)} |b(m)| m^{-(\frac{r}{2}+\frac12)}}_{:= \sum\limits_1}. \label{eq3}
\end{align}
To estimate the last sum above, we have
\begin{align}
\sum_1&=\sum_{J<n\leq m \leq 2K} |b(n)| n^{-(\frac{r}{2}+\frac14)} |b(m)| m^{-(\frac{r}{2}+\frac12)}\\
&\ll \sum_{J<n \leq 2K} |b(n)| n^{-(\frac{r}{2}+\frac14)}  \sum_{n\leq m \leq 2K} \frac{|b(m)|}{m^{\sigma_a+\varepsilon}}   m^{(\frac{\delta_0}{2}-\frac14+\varepsilon)}\\
&\ll K^{\varepsilon}\sum_{J<n \leq 2K} |b(n)| n^{-(\frac{r}{2}+\frac14)} n^{(\frac{\delta_0}{2}-\frac14)}\\
&\ll \begin{cases}
     K^{\delta_0-\frac 14+\varepsilon}&\text{if} \ \ \delta_0 \geq \frac 14 \\
    K^{\varepsilon}J^{\delta_0-\frac 14} &\text{if} \ \ \delta_0<\frac 14,
\end{cases}
\end{align}
where the second last step above follows by the fact that  $\delta_0\leq \frac12$. Thus,
\begin{align}
    K^{-r+\frac 14}\ \sum_1\ll  \begin{cases}
     K^{-r+\delta_0+\varepsilon}&\text{if}  \ \ \delta_0\geq \frac{1}{4} \\
    K^{-r+ \frac 14 +\varepsilon}J^{\delta_0-\frac 14}&\text{if} \ \ \delta_0<\frac 14.
\end{cases}
\end{align}
Hence, \eqref{eq3} gives
\begin{align}
    &\sum_{J<n\leq m\leq 2K \atop{K<k\leq 2K \atop{\eqref{eq2}}}} |b(n)b(m)|(nm)^{-\frac{r}{2}-\frac14}k^{-r-\frac12}\\
   &\ll \begin{cases}
     K^{-r+\delta_0+1+\varepsilon}M^{-c}+K^{-r+\delta_0+\frac16+\varepsilon}&\text{if} \ \ \ \delta_0\geq \frac 14 \\
     K^{-r+\delta_0+1+\varepsilon}M^{-c}+K^{-r+\delta_0+\frac16+\varepsilon}+K^{-r+\frac 14+\varepsilon} J^{\delta_0-\frac 14} &\text{if} \ \ \ \delta_0<\frac 14.
\end{cases}
    \end{align}
Combining the above two bounds, we get 
\begin{align}
    \sum_{J<n\leq m\leq 2K \atop{K<k\leq 2K \atop{\eqref{eq2}}}} |b(n)b(m)|(nm)^{-\frac{r}{2}-\frac14}k^{-r-\frac12} &\ll K^{-r+\delta_0+1+\varepsilon}M^{-c}+K^{-r+\delta_0+\frac16+\varepsilon}+K^{-r+\frac 14+\varepsilon} J^{\delta_0-\frac 14}\\
    &\ll K^{-r+\delta_0+\frac{1}{2}}M^{\frac{\theta}{2}-c+\varepsilon}+K^{-r+\delta_0+\frac16+\varepsilon}+K^{-r+\frac 14+\varepsilon} J^{\delta_0-\frac 14},
\label{W1finalexp}
\end{align}
where $\frac{\theta}{2} < c < 1$ and $r - \delta_0 \geq \frac{1}{2}$. Since $k\leq l\leq 9k$, using Lemma \ref{lem1}, $M^{-c}\geq |\Delta_1|>0$ implies $k^{-\frac72}\ll M^{-c}$. Substituting $K=\frac{M^{\theta}}{2^i}$ in \eqref{W1finalexp}, and summing over the all dyadic intervals, \eqref{eq2.5} gives
\begin{align}
    W_1&\ll  M^{1 + \varepsilon}\left(M^{\frac{2c}{7}\left(-r+\delta_0+\frac 12\right)+\left(\frac{\theta}{2}-c+\varepsilon\right)} + M^{\frac{2c}{7}\left(-r+\delta_0+\frac 16+\varepsilon\right)} + M^{\frac{2c}{7}\left(-r+\frac 14+\varepsilon\right)}J^{\delta_0 - \frac{1}{4}} \right),\label{ww1}
   \end{align}
  using the fact that \begin{align}\label{mk1est}
  M^{\theta}\gg K\gg k\gg M^{\frac{2c}{7}}.
\end{align}  

Our next goal is to estimate $W_2$. We decompose the sum into the following two parts:
\begin{align}
    W_2=\bigg(\underbrace{\sum_{M^{-c}<|\Delta_1|\leq k^{\alpha}}}_{W_{21}}+\underbrace{\sum_{|\Delta_1|>k^{\alpha}}}_{W_{22}}\bigg)|f||\Delta_1|^{-1},
\end{align}
where $\alpha$ is a real number satisfying
\begin{align}
    \alpha<\frac12 \ \ \  \ \mbox{and}\ \ \  \ \theta(2\delta_0-\alpha)\leq \theta(1-\alpha)<c.
\end{align}
Without loss of generality, we may assume that $J<n\leq m\leq k\leq M^{\theta}$. Let us first estimate $W_{21}$. From the fact
\begin{align}
    l=(\sqrt{n}+\sqrt{m}+\sqrt{k}-\Delta_1)^2 = (\sqrt{n}+\sqrt{m}+\sqrt{k})^2+O(|\Delta_1|\sqrt{k}),
\end{align}
we can infer that $l \gg k$.
Moreover, since 
$$l-(\sqrt{n}+\sqrt{m}+\sqrt{k})^2=O(|\Delta_1|\sqrt{k})\ll k,$$ 
there are $\ll |\Delta_1|\sqrt{k}+1$ such $l$. Thus,
\begin{align}
    W_{21}&\ll M^{\varepsilon}\sum_{J<n\leq m\leq k\leq M^{\theta}\atop{M^{-c}<|\Delta_1|\leq k^{\alpha}}}|b(m)||b(n)|| b(k)|(nmk)^{-\frac{r}{2}-\frac14}k^{-\frac{r}{2}-\frac14}|\Delta_1|^{-1}(|\Delta_1|\sqrt{k}+1)\\
    &\ll M^{\varepsilon}\sum_{J<n\leq m\leq k\leq M^{\theta}\atop{M^{-c}<|\Delta_1|}}|b(m)||b(n)|| b(k)|(nm)^{-\frac{r}{2}-\frac14}k^{-r-\frac12}(\sqrt{k}+M^{c})\\
    &\ll M^{\varepsilon}\underbrace{\sum_{J<n\leq m\leq k\leq M^{\theta}}|b(m)||b(n)|| b(k)|(nm)^{-\frac{r}{2}-\frac14}k^{-r}}_{:=\sum\limits_{211}}\\
    &\hspace{3cm}+M^{c+ \varepsilon}\underbrace{\sum_{J<n\leq m\leq k\leq M^{\theta}} |b(m)||b(n)|| b(k)|(nm)^{-\frac{r}{2}-\frac14}k^{-r-\frac12}}_{:=\sum\limits_{212}}.\label{W21bound}
\end{align}

Now, we estimates the sums $\sum\limits_{211}$ and $\sum\limits_{212}$, given in \eqref{W21bound}.  Consider first
\begin{align}
  \sum_{211}  &\ll \sum_{J<n\leq m\leq M^{\theta}}|b(m)||b(n)| (nm)^{-\frac{r}{2}-\frac14} \sum_{m\leq k\leq M^{\theta}} | b(k)|k^{-r} \nonumber\\
  &\ll \sum_{J<n\leq m\leq M^{\theta}}|b(m)||b(n)| (nm)^{-\frac{r}{2}-\frac14} \sum_{m\leq k\leq M^{\theta}} \frac{| b(k)|}{k^{\sigma_a+\varepsilon}}k^{\frac{\delta_0}{2} -\frac12(r-\frac12)+\varepsilon} \nonumber\\
  &\ll M^{\frac{\theta }{4}+\varepsilon} \sum_{J<n\leq m\leq M^{\theta}}|b(m)||b(n)| n^{-\frac{r}{2}-\frac14} m^{\frac{\delta_0}{2}-r-\frac 14}\nonumber\\
  & \ll M^{\frac{\theta }{4}+\varepsilon} \sum_{J<n\leq M^{\theta}}|b(n)| n^{-\frac{r}{2}-\frac14} \sum_{n\leq m\leq M^{\theta}} \frac{|b(m)|}{m^{\sigma_a+\varepsilon}}m^{\delta_0-\frac{r}{2}+\varepsilon},
\end{align}
where the third step follows by the fact $r>\delta_0$. Hence,
\begin{align}\label{eq211}
 \sum_{211}  \ll \begin{cases}
     M^{\theta\left(\frac{1 }{4}+\frac{3\delta_0}{2}-\frac{r}{2}\right)+\varepsilon}&\mbox{if} \ \ \delta_0\geq \frac{r}{2}\\
      M^{\theta\left(\frac{1 }{4}+\frac{\delta_0}{2}\right)+\varepsilon}J^{\delta_0-\frac{r}{2}} &\mbox{if} \ \ \delta_0< \frac{r}{2}.\\
 \end{cases}    
\end{align}
Next, we estimate
\begin{align}
\sum_{212} &\ll \sum_{J<n\leq m\leq M^{\theta}}|b(m)||b(n)| (nm)^{-\frac{r}{2}-\frac14} \sum_{m\leq k\leq M^{\theta}} | b(k)|k^{-r-\frac12} \nonumber\\   
&\ll \sum_{J<n\leq m\leq M^{\theta}}|b(m)||b(n)| (nm)^{-\frac{r}{2}-\frac14} \sum_{m\leq k\leq M^{\theta}} \frac{| b(k)|}{k^{\sigma_a+\varepsilon}} k^{\frac{\delta_0}{2}-\frac{r}{2}-\frac14+\varepsilon} \nonumber\\ 
&\ll \sum_{J<n\leq M^{\theta}}|b(n)| n^{-\frac{r}{2}-\frac14} \sum_{n\leq m\leq M^{\theta}} \frac{| b(m)|}{m^{\sigma_a+\varepsilon}} m^{\delta_0-\frac{r}{2}-\frac14+\varepsilon},
\end{align}
since $r> \frac12\geq \delta_0$. This gives that 
\begin{align}
 \sum_{212} &\ll   M^{\theta(\delta_0-\frac{r}{2}+\varepsilon)} \sum_{J<n\leq M^{\theta}}|b(n)| n^{-\frac{r}{2}-\frac12} \\ &\ll M^{\theta(\delta_0-\frac{r}{2}+\varepsilon)} \sum_{J<n\leq M^{\theta}}\frac{|b(n)|}{n^{\sigma_a+\varepsilon}} n^{\frac{\delta_0}{2}-\frac14} \\ &\ll M^{\theta(\delta_0-\frac{r}{2}+\varepsilon)} J^{\frac{\delta_0}{2}-\frac14},\label{sum2121}
\end{align}
whenever $\delta_0\geq \frac{r}{2}$, and the last step follows because $\delta_0\leq \frac12$. Similarly, if $\delta_0<\frac{r}{2}$, then we obtain  
\begin{align}
 \sum_{212} \ll   \sum_{J<n\leq M^{\theta}}\frac{|b(n)|}{n^{\sigma_a+\varepsilon}} n^{\frac{3\delta_0}{2}-\frac{r}{2}-\frac14+\varepsilon}  \ll J^{\frac{3\delta_0}{2}-\frac{r}{2}-\frac14+\varepsilon}.\label{sum2122}
\end{align}
Hence, by combining \eqref{sum2121} and \eqref{sum2122}, we get
\begin{align}\label{eq212}
 \sum_{212} \ll
 \begin{cases}
  M^{\theta(\delta_0-\frac{r}{2}+\varepsilon)} J^{\frac{\delta_0}{2}-\frac14}&\mbox{if} \ \ \delta_0\geq \frac{r}{2}\\
 J^{\frac{3\delta_0}{2}-\frac{r}{2}-\frac14+\varepsilon}&\mbox{if}  \ \ \delta_0< \frac{r}{2}.
 \end{cases}  
 \end{align}
Therefore, \eqref{eq211} and \eqref{eq212} together with \eqref{W21bound} give
\begin{align}
W_{21}\ll    \begin{cases}
    M^{\theta\left(\frac{1 }{4}+\frac{3\delta_0}{2}-\frac{r}{2}\right)+\varepsilon} +M^{c+\theta(\delta_0-\frac{r}{2})+\varepsilon} J^{\frac{\delta_0}{2}-\frac14}&\mbox{if} \  \ \delta_0\geq \frac{r}{2}\\
   M^{\theta\left(\frac{1 }{4}+\frac{\delta_0}{2}\right) +\varepsilon}J^{\delta_0-\frac{r}{2}}+M^{c+\varepsilon}J^{\frac{3\delta_0}{2}-\frac{r}{2}-\frac14+\varepsilon}&\mbox{if}  \ \ \delta_0< \frac{r}{2}.\\
\end{cases}\label{W21finalbound}
\end{align}

Now, we evaluate $W_{22}$, which is given by 
\begin{align}
    W_{22}&\ll \sum_{k^{\alpha}<|\Delta_1|\atop{J<n\leq m\leq k\leq M^{\theta}\atop{J<l\leq M^{\theta}}}}|b(m)||b(n)|| b(k)||b(l)|(nmkl)^{-\frac{r}{2}-\frac14} k^{-\alpha}\\
    &\ll \sum_{k^{\alpha}<|\Delta_1|\atop{J<n\leq m\leq k\leq M^{\theta}\atop{J<l\leq M^{\theta}}}} |b(m)||b(n)|| b(k)||b(l)|(nml)^{-\frac{r}{2}-\frac14}k^{-\frac{r}{2}-\frac14-\alpha}\\
    &\ll \sum_{J<n\leq m\leq k\leq M^{\theta}} |b(m)||b(n)|| b(k)|(nm)^{-\frac{r}{2}-\frac14}k^{-\frac{r}{2}-\frac14-\alpha} \sum_{J<l\leq M^{\theta}}|b(l)| l^{-\frac{r}{2}-\frac14}\\
    &\ll M^{\frac{\theta~\delta_0}{2}+\varepsilon} \sum_{J<n\leq m\leq k\leq M^{\theta}} |b(m)||b(n)|| b(k)|(nm)^{-\frac{r}{2}-\frac14}k^{-\frac{r}{2}-\frac14-\alpha}.
    \end{align}
Using the fact that $\alpha < \frac{1}{2}$,  the above inequality yields 
    \begin{align}
    W_{22}&\ll M^{\frac{\theta~\delta_0}{2}+\varepsilon} \sum_{J<n\leq m\leq M^{\theta}} |b(m)||b(n)|(nm)^{-\frac{r}{2}-\frac14}\left(m^{\frac{\delta_0}{2}-\alpha+\varepsilon}+M^{\theta(\frac{\delta_0}{2}-\alpha)+\varepsilon}\right) \\
    &\ll M^{\frac{\theta~\delta_0}{2}+\varepsilon} \sum_{J<n\leq M^{\theta}} |b(n)|n^{-\frac{r}{2}-\frac14}\left(n^{(\delta_0-\alpha+\varepsilon)}+M^{\theta(\delta_0-\alpha)+\varepsilon}\right)+M^{\theta(2\delta_0-\alpha)+\varepsilon} \\
    &\ll M^{\frac{\theta~\delta_0}{2}+\varepsilon} J^{\frac{3\delta_0}{2}-\alpha}+ M^{\theta(2\delta_0-\alpha) + \varepsilon}.\label{W22bound}
\end{align}
Therefore, combining \eqref{W21finalbound} and \eqref{W22bound}, we obtain that $W_2$ is
\begin{align}
    \ll \begin{cases}
    M^{\theta\left(\frac{1 }{4}+\frac{3\delta_0}{2}-\frac{r}{2}\right)+\varepsilon} +M^{c+\theta(\delta_0-\frac{r}{2})+\varepsilon} J^{\frac{\delta_0}{2}-\frac14}+M^{\frac{\delta_0}{2}\theta+\varepsilon}J^{\frac{3\delta_0}{2}-\alpha}+ M^{\theta(2\delta_0-\alpha) + \varepsilon}&\mbox{if} \  \ \delta_0\geq \frac{r}{2}\\
   M^{\theta\left(\frac{1 }{4}+\frac{\delta_0}{2}\right) +\varepsilon}J^{\delta_0-\frac{r}{2}}+M^{c+\varepsilon}J^{\frac{3\delta_0}{2}-\frac{r}{2}-\frac14}+M^{\frac{\delta_0}{2}\theta+\varepsilon}J^{\frac{3\delta_0}{2}-\alpha}+ M^{\theta(2\delta_0-\alpha) + \varepsilon}&\mbox{if}  \ \ \delta_0< \frac{r}{2}.\\
\end{cases} 
\end{align}
This can be rewritten as 
\begin{align}
W_2\ll \begin{cases}
    M^{\theta\left(\frac{1 }{4}+\frac{3\delta_0}{2}-\frac{r}{2}\right)+\varepsilon} +M^{c+\theta(\delta_0-\frac{r}{2})+\varepsilon} J^{\frac{\delta_0}{2}-\frac14} + M^{c+\varepsilon}J^{\frac{3\delta_0}{2} - \alpha} + M^{c+\varepsilon}&\mbox{if} \  \ \delta_0\geq \frac{r}{2}\\
M^{c+\varepsilon}+M^{c+\varepsilon}J^{\frac{3\delta_0}{2}-\alpha} \label{W2final}
&\mbox{if}  \ \ \delta_0< \frac{r}{2},
\end{cases}
\end{align}
where the inequality in second case above follows by the fact $\delta_0<\frac{r}{2}$, $\theta\left(\frac{1 }{4}+\frac{\delta_0}{2}\right)<\frac{\theta}{2}<c$, $\frac{3\delta_0}{2}-\frac{r}{2}-\frac14<0$, $\frac{\theta\delta_0}{2}<\frac{\theta}{2}<c$, and $\theta(2\delta_0-\alpha)\leq \theta(1-\alpha)<c$.
By \eqref{S3division}, \eqref{ww1} and \eqref{W2final}, we get the bound for $\int_M^{2M}S_3(t
)dt$ in the following two cases:\\

\textbf{Case (i):} Suppose that $\delta_0\geq \frac r2$. We have
\begin{align}
    \int_M^{2M}S_3(t)dt\ll& ~M^{1 + \varepsilon}\left(M^{\frac{2c}{7}\left(-r+\delta_0+\frac 12\right)+\left(\frac{\theta}{2}-c+\varepsilon\right)} + M^{\frac{2c}{7}\left(-r+\delta_0+\frac 16+\varepsilon\right)} + M^{\frac{2c}{7}\left(-r+\frac 14+\varepsilon\right)}J^{\delta_0 - \frac{1}{4}} \right)\\
    &\qquad+M^{\theta\left(\frac{1 }{4}+\frac{3\delta_0}{2}-\frac{r}{2}\right)+\varepsilon} +M^{c+\theta(\delta_0-\frac{r}{2})+\varepsilon} J^{\frac{\delta_0}{2}-\frac14} + M^{c+\varepsilon}J^{\frac{3\delta_0}{2} - \alpha} + M^{c+\varepsilon}.\label{S3case1}
\end{align}
Note that all exponents of $M$ appearing on the right-hand side above are strictly less than $1$ because
$$\theta\left(\frac 14+\frac{3\delta_0}{2}-\frac {r}{2}\right)\leq\theta\left(\frac 14+\delta_0+\left(\frac{\delta_0}{2}-\frac r2\right)\right)<\theta\delta_0\leq \frac{\theta}{2}<1,$$
and
$$c+\theta\left(\delta_0-\frac{r}{2}\right)=c+\frac{\theta\delta_0 }{2}+\frac{\theta}{2}\left(\delta_0-r\right)\leq c+\frac{\theta}{4}-\frac{\theta}{4}=c <1.$$

\textbf{Case (ii):} Let $\delta_0<\frac r2$. Then 
\begin{align}
    \int_M^{2M}S_3(t)dt & \ll M^{1 + \varepsilon}\left(M^{\frac{2c}{7}\left(-r+\delta_0+\frac 12\right)+\left(\frac{\theta}{2}-c+\varepsilon\right)} + M^{\frac{2c}{7}\left(-r+\delta_0+\frac 16+\varepsilon\right)} + M^{\frac{2c}{7}\left(-r+\frac 14+\varepsilon\right)}J^{\delta_0 - \frac{1}{4}} \right)\\
    & \qquad +M^{c+\varepsilon}+M^{c+\varepsilon}J^{\frac{3\delta_0}{2}-\alpha}.\label{S3case2}
    \end{align}

Same bounds can be established for $\int_{M}^{2M}S_2(t)dt$ and $\int_M^{2M}S_5(t)dt$.

\subsection{Estimation of $\int_M^{2M} S_6(t) dt$ and $\int_M^{2M} S_7(t) dt$}\label{ss4}  We estimate the bound for $\int_M^{2M} S_6(t) dt$ here. The estimation of the corresponding integral involving $S_7(t)$ proceeds along similar lines.

Without loss of generality, we may assume that $J<n\leq m\leq k\leq l\leq M^{\theta}$ in this case.
\begin{align}
    \int_M^{2M} S_6(t) dt &\ll  \sum_{J<n\leq m\leq k\leq l\leq M^{\theta}} \dfrac{|b(m)||b(n)|| b(k)||b(l)|}{(nmkl)^{\frac{r}{2}+\frac14}}\cdot \dfrac{1}{\sqrt{n} + \sqrt{m} + \sqrt{k} + \sqrt{l}} \\ 
    &\ll  \sum_{J<n\leq m\leq k\leq M^{\theta}} \dfrac{|b(m)||b(n)|| b(k)|}{(nmk)^{\frac{r}{2}+\frac14}} \sum_{k\leq l\leq M^{\theta}} \dfrac{|b(l)|}{l^{\frac{r}{2}+\frac34}} \hspace{0.4cm}\\
    &\ll  M^{\varepsilon}\sum_{J<n\leq m\leq M^{\theta}} \dfrac{|b(m)||b(n)|}{(nm)^{\frac{r}{2}+\frac14}} \sum_{m \leq k \leq M^\theta}\frac{|b(k)|}{k^{\sigma_a + \varepsilon}} k^{\delta_0-\frac12+\varepsilon} \hspace{0.4cm}\\
    &\ll  M^{\varepsilon}\sum_{J<n\leq m\leq k\leq M^{\theta}} \dfrac{|b(n)|}{n^{\frac{r}{2}+\frac14}} \sum_{n \leq m \leq M^\theta} \frac{|b(m)|}{m^{\sigma_a + \varepsilon}} m^{\frac{3\delta_0}{2}-\frac12+\varepsilon}.
\end{align}
For $\delta_0 \leq \frac{1}{2}$, the exponent of $m$  in the above inequality, namely  $\frac{3\delta_0}{2} - \frac{1}{2}$, can be either positive or negative. So, the above inequality implies
\begin{align}
      \int_M^{2M} S_6(t) dt  &\ll  M^{\varepsilon}\sum_{J<n\leq M^{\theta}} \dfrac{|b(n)|}{n^{\frac{r}{2}+\frac14}} \left(n^{\frac{3\delta_0}{2}-\frac12+\varepsilon}+M^{\theta(\frac{3\delta_0}{2}-\frac12)+\varepsilon}\right)\\
    &\ll J^{(2\delta_0-\frac12)+\varepsilon}+ M^{\theta(2\delta_0-\frac12)+\varepsilon}. \label{S6estimation}
\end{align}
Here, 
$$\theta\left(2\delta_0-\frac 12\right)\leq \frac{\theta}{2}<1.$$
On similar lines, we can show that
\begin{align}
    \int_M^{2M} S_7(t) dt 
    &\ll J^{(2\delta_0-\frac12)+\varepsilon}+ M^{\theta(2\delta_0-\frac12)+\varepsilon}.\label{S7estimation}
\end{align}
By Equation \eqref{3.4}, we have
\begin{align}
    \int_M^{2M}|S(t)|^{4}dt=\sum_{i=1}^7 \left( \int_M^{2M}S_i(t)dt \right).
\end{align}
Clearly, from the estimations of integrals involving $S_i(t)$ given in Subsections \ref{ss1}-\ref{ss4}, we deduce the following:\\

\textbf{Case (i):} For $\delta_0\geq \frac r2$, we have
\begin{align}
    \int_M^{2M}|S(t)|^4dt&\ll ~MJ^{\frac 14-\frac r2+\varepsilon}+M^{1+\frac{2c}{7}\left(-r+\delta_0+\frac 12\right)+\left(\frac{\theta}{2}-c\right)+\varepsilon} + M^{1+\frac{2c}{7}\left(-r+\delta_0+\frac 16\right)+\varepsilon} \\ & \qquad+ M^{1+\frac{2c}{7}\left(-r+\frac 14\right)+\varepsilon}J^{\delta_0 - \frac{1}{4}}+M^{\theta\left(\frac{1 }{4}+\frac{3\delta_0}{2}-\frac{r}{2}\right)+\varepsilon} +M^{c+\theta(\delta_0-\frac{r}{2})+\varepsilon} J^{\frac{\delta_0}{2}-\frac14} \\ & \qquad+ M^{c+\varepsilon}J^{\frac{3\delta_0}{2} - \alpha}+ M^{c+\varepsilon}+J^{(2\delta_0-\frac12)+\varepsilon}+ M^{\theta(2\delta_0-\frac12)+\varepsilon}\label{S^4 bound with J1}.
\end{align}

\textbf{Case (ii):} For $\delta_0<\frac r2$, we get
\begin{align}
    \int_M^{2M}|S(t)|^4dt&\ll ~MJ^{\frac 14-\frac r2+\varepsilon}+M^{1+\frac{2c}{7}\left(-r+\delta_0+\frac 12\right)+\left(\frac{\theta}{2}-c\right)+\varepsilon} + M^{1+\frac{2c}{7}\left(-r+\delta_0+\frac 16\right) +\varepsilon}
    \\ & \qquad + M^{1+\frac{2c}{7}\left(-r+\frac 14\right)+\varepsilon}J^{\delta_0 - \frac{1}{4}}  +M^{c+\varepsilon} +M^{c+\varepsilon}J^{\frac{3\delta_0}{2}-\alpha} 
    + J^{(2\delta_0-\frac12)+\varepsilon} \\
    & \qquad+ M^{\theta(2\delta_0-\frac12)+\varepsilon}\label{S^4 bound with J2}.
\end{align}
\subsection{Estimation of $\int_M^{2M} |S(t)|^2 dt$} For real numbers  
$\alpha_i \neq \alpha_j$, and $\delta$, we have 
\begin{align}
\int_M^{2M}\cos(\alpha_i t+\delta)\cos(\alpha_j t+\delta)\,dt
\ll \min\!\left\{\bigl||\alpha_i|-|\alpha_j|\bigr|^{-1},\,M\right\}.
\end{align}
Using this with the bound $b(n)\ll n^{\varepsilon}$, we obtain
\begin{align}\label{S2estimation}
\int_M^{2M}|S(t)|^2\,dt
\ll
M\sum_{J<n\le M^{\theta}} n^{-\left(r+\frac{1}{2}\right)+\varepsilon}
+
\sum_{J<m<n\le M^{\theta}}
m^{-\left(r+\frac{1}{2}\right)+\varepsilon}n^{\varepsilon}(\sqrt n-\sqrt m)^{-1}.
\end{align}
The first term on the right-hand side of the above inequality 
is estimated as
\begin{align}\label{S2ft1}
M\sum_{J<n\le M^{\theta}} n^{-\left(r+\frac{1}{2}\right)+\varepsilon}
\ll M J^{\frac{1}{2}-r+\varepsilon}.
\end{align}
For the second term, we write
\begin{align}
&\sum_{J<m<n\le M^{\theta}}
m^{-\left(r+\frac{1}{2}\right)+\varepsilon}n^{\varepsilon}(\sqrt n-\sqrt m)^{-1}\\
&\ll
\sum_{J<m<n\le M^{\theta}}
m^{-\left(r+\frac{1}{2}\right)+\varepsilon}n^{\varepsilon}\frac{\sqrt n+\sqrt m}{n-m} \nonumber\\
&\ll
\sum_{J<m\le M^{\theta}}
m^{-\left(r+\frac{1}{2}\right)+\varepsilon}
\left(
\sum_{m<n\le 2m}\frac{M^{\frac{\theta}{2} + \varepsilon}}{n-m}
+
\sum_{2m<n\le M^{\theta}} n^{-\frac{1}{2}+ \varepsilon}
\right) \nonumber\\
&\ll
M^{\frac{\theta}{2} + \varepsilon}\sum_{J<m\le M^{\theta}} m^{-\left(r+\frac{1}{2}\right)+\varepsilon} \nonumber\\
&\ll
M^{\frac{\theta}{2} + \varepsilon} J^{\frac 12-r+\varepsilon}. \label{firstintegralestimate}
\end{align}
By \eqref{S2estimation}, \eqref{S2ft1} and \eqref{firstintegralestimate}, we conclude that
\begin{align}
\int_M^{2M}|S(t)|^2\,dt
\ll
M J^{\frac 12-r+\varepsilon}. \label{S(t)^2bound}
\end{align}

\subsection{Estimation of $\int_M^{2M}|S(t)|^3dt$}\label{ss6} By using Cauchy-Schwarz inequality, we have
\begin{align}\label{st324}
    \int_M^{2M} |S(t)|^3 dt \ll \left( \int_M^{2M} |S(t)|^2 dt \right)^{\frac 12} \left(\int_M^{2M} |S(t)|^4 dt \right)^{\frac 12}.
\end{align}
Using \eqref{S^4 bound with J1} for $\delta_0\geq \frac r2$ and \eqref{S^4 bound with J2} for $\delta_0<\frac r2$, together with \eqref{S(t)^2bound}, the above inequality implies that
\begin{align}
     \int_M^{2M} |S(t)|^3 dt 
    &\ll MJ^{\frac 14-\frac r2+\varepsilon}\left( 1+M^{\frac{c}{7}\left(-r+\frac 14\right)+\varepsilon}J^{\frac{\delta_0}{2} - \frac{1}{8}}+ M^{\frac{(c-1)}{2}+\varepsilon}J^{\frac{3\delta_0}{4}-\frac{\alpha}{2}}+M^{-\frac 12}J^{\delta_0-\frac 14+\varepsilon}\right).
    \label{st3r2less}
\end{align}
\\
Using \eqref{S^4 bound with J1} for $\delta_0\geq\frac r2$, $\delta_0\neq \frac 12$ and \eqref{S^4 bound with J2} for $\delta_0<\frac r2$, along with \eqref{st3r2less} in inequality \eqref{p1tpower4_case1}, substituting $M=\frac{X}{2^j}$ and summing over all the dyadic intervals with $j\in\mathbb{N}$, we obtain that
\begin{align}
   \limsup_{X \rightarrow \infty} \frac{1}{X}\int_1^X |\mathcal{P}_1(t)|^4 dt \ll J^{\frac{1}{4} - \frac{r}{2} + \epsilon}, 
\end{align}
which tends to zero whenever $J \rightarrow \infty$, since $r > \frac{1}{2}$. In case $\delta_0 = \frac{1}{2}$, we use \eqref{p1tpower4_case2} to get the same conclusion. This completes the proof of Theorem \ref{th1}.

\medskip

\begin{proof}[Proof of Corollary \ref{cor:fourth_moment}]
By Theorem \ref{th1}, the function $P(t)$ belongs to the Besicovitch space $B^4$, and its approximating trigonometric polynomials $p_j(t)$ satisfy 
\[
\|P(t) - p_j(t)\|_{4} \to 0 \quad \text{as } j \to \infty.
\]
Using the continuity of the norm $\|\cdot\|_{4}$, we obtain
\begin{align}
\|P\|_{4}^4 &= \lim_{j \to \infty} \|p_j(t)\|_{4}^4 \\
&= \frac{3}{8\pi^2} C_{\varphi}^{-2r-1} \sum_{\substack{n,m,k,l \ge 1 \\ \sqrt{n}+\sqrt{m} = \sqrt{k}+\sqrt{l}}} \frac{b(n)\overline{b(m)}b(k)\overline{b(l)}}{(nmkl)^{\frac{r}{2}+\frac{1}{4}}} \\
&\quad - \frac{\sin(r\pi)}{2\pi^2} C_{\varphi}^{-2r-1} \sum_{\substack{n,m,k,l \ge 1 \\ \sqrt{n}+\sqrt{m}+\sqrt{k} = \sqrt{l}}} \frac{b(n)\overline{b(m)}b(k)\overline{b(l)}}{(nmkl)^{\frac{r}{2}+\frac{1}{4}}},
\end{align}
which completes the proof.
\end{proof}

\section{Examples}\label{examples}
\subsection{The Classical Divisor Problem}\label{example1}
Let $b(n)=d(n)$, where $d(n)$ denotes the classical divisor function, and let $\mu_n=\pi n$. The associated Dirichlet series is given by
$$\varphi(s)=\pi^{-s}\zeta^{2}(s),$$
where $\zeta(s)$ is the Riemann zeta function. It is well known that $\zeta^{2}(s)$ satisfies the functional equation
$$\pi^{-s}\Gamma^2\left(\frac s2\right)\zeta^2(s)=\pi^{-(1-s)}\Gamma^2\left(\frac{1-s}{2}\right)\zeta^2(1-s),$$
which implies that $\varphi(s)$ satisfies
$$\Gamma^2\left(\frac s2\right)\varphi(s)=\Gamma^2\left(\frac{1-s}{2}\right)\varphi(1-s).$$
This is precisely the functional equation required in Theorem \ref{truncatedformula2} with the parameter $p=0$. Consequently, by Theorem \ref{th1}, it follows that the error term $\Delta(x)$ associated to the Dirichlet divisor problem, normalized by the factor $x^{\frac 14}$, belongs to the Besicovitch space $B^{q}$ for each $q=1,2,3,4$. 

\subsection{Epstein Zeta Functions and the Gauss Circle Problem}\label{example2}
For a $m\times m$ positive definite matrix $Q$ with integer entries, the quadratic form associated to $Q$ is defined by $Q[\textbf{x}]=\textbf{x}^tQ\textbf{x}$ for $\textbf{x}\in\mathbb{R}^n$. For $n\in\mathbb{Z}_{\geq 0}$, let $r_{Q}(n)$ denote the number of $x\in\mathbb{Z}^n$ satisfying $Q[\textbf{x}]=n$, i.e. the number of representations of $n$ by $Q$. Then the Epstein zeta function attached to $Q$ is given by
\begin{align}
    \zeta(s;Q)=\sum_{n=1}^{\infty}\frac{r_Q(n)}{n^s}=\sum_{0\neq\textbf{x}\in\mathbb{Z}^m}Q[\textbf{x}]^{-s},\  \ \ \text{for } \ \Re(s)>\frac m2.
\end{align}
This was introduced by Epstein \cite{Epstein}, who also proved that $\zeta(s;Q)$ satisfies the functional equation
\begin{align}
    \pi^{-s}\Gamma(s)\zeta(s;Q)=\sqrt{\det Q}~\pi^{s-\frac m2}\Gamma\left(\frac m2-s\right)\zeta\left(\frac m2-s;Q^{-1}\right).\label{epsteinfnal}
\end{align}
In case $Q\in\text{SL}_m(\mathbb{Z})$, $\zeta(s;Q)=\zeta(s;Q^{-1})$. For such $Q$, if $b(n)=r_Q(n)$ and $\mu_n=\pi n$, then the associated Dirichlet series is given by $$\varphi(s)=\pi^{-s}\zeta(s;Q).$$ 
If the condition $r_Q(n)\ll n^{\varepsilon}$ holds and $m=2$, then $\varphi(s)$ belongs to the class of Dirichlet series considered in Theorem \ref{truncatedformula} with $r=1$.
By Theorem \ref{th1}, we infer that $x^{-\frac 14}\mathcal{E}(x)$ is $\mathcal{B}^4$-almost periodic, where $\mathcal{E}(x)$ is the error term associated to the sum $\sideset{}{'}\sum_{n\leq x}r_Q(n)$.

The arithmetical function $r(n)$, which counts the number of representations of $n$ as a sum of two integer squares, is a special case of the general quadratic form obtained by taking $Q=\mathbf{I}_2$, the $2\times2$ identity matrix. By Jacobi's formula \cite[Theorem 3.2.1]{Berndtbook}, we have 
$$r(n)=4\sum_{d|n}\chi_4(d),$$
where  $\chi_4$ denotes the non-principal character modulo $4$, i.e.
$$\chi_4(n)=\begin{cases}
    (-1)^{\frac{n-1}{2}}\ \  &\text{if } \ n\equiv 1 ~(\bmod2)\\
    0\ \ &\text{if }\ n\equiv 0 ~(\bmod 2).
\end{cases}$$
Therefore, if $b(n)=r(n)\leq 4~d(n)\ll n^{\varepsilon}$ and $\mu_n=\pi n$, then $\varphi(s)$ can be written as
$$\varphi(s)=4\pi^{-s}~\zeta(s)L(s,\chi_4)=\pi^{-s}\zeta(s;\textbf{1}_2),$$
where $L(s,\chi_4)$ is the Dirichlet \(L\)-function associated with $\chi_4$. By equation \eqref{epsteinfnal}, $\varphi$ satisfies the functional equation
$$\Gamma(s)\varphi(s)=\Gamma(1-s)\varphi(1-s).$$
Hence, $x^{-\frac 14}\mathcal{P}(x)$ is $\mathcal{B}^4$-almost periodic, where $\mathcal{P}(x)$ is the error term associated to the Gauss circle problem. 

Note that $\zeta(s;\textbf{I}_2)=4~\zeta(s)L(s,\chi_4)$ is the Dedekind zeta function of the Gaussian field $\mathbb{Q}(i)$. 

\subsection{Exponential Divisor Sums}
Let $h/k$ be a fixed rational such that $(h,k)=1$, $h^2\equiv 1~(\bmod k)$ and $k\geq 1$. Incorporating the exponential factor $e\left(nh/k\right)=e^{\frac{2\pi i nh}{k}}$ to the divisor problem considered in Example \ref{example1}, we define the summatory function 
\begin{align}
    A_0(x)=\sideset{}{'}\sum_{n\leq x}d(n)e(nh/k),\label{exponentialsum}
\end{align}
and the corresponding Dirichlet series as
$$E(s,h/k)=\sum_{n=1}^{\infty}\frac{d(n)e(nh/k)}{n^s}.$$
This exponential sum was studied by Jutila \cite{Jutila}, who proved that $E(s,h/k)$ satisfies the functional equation
$$\left(\frac{\pi}{k}\right)^{-s}\Gamma^2\left(\frac s2\right)E(s,h/k)=\left(\frac{\pi}{k}\right)^{-(1-s)}\Gamma^2\left(\frac{1-s}{2}\right)E(1-s,h/k).$$
Taking $b(n)=d(n)e(nh/k)$ and $\mu_n=\pi n/k$,  $\varphi(s)$ belongs to the class of Dirichlet series considered in Theorem \ref{truncatedformula2} for $p=0$, similar to the case of function $d(n)$. Therefore, Theorem \ref{th1} implies that $x^{-\frac 14}\Delta_0(x)$ belongs to the Besicovitch space $B^q$ for $q=1,2,3,4$, where $\Delta_0(x)$ denotes the error term associated with the sum $A_0(x)$ in \eqref{exponentialsum} (for explicit formula, see \cite[Equation (1.5.3)]{Jutila}). 

Similar conclusions can be drawn if $d(n)$ in \eqref{exponentialsum} is replaced by the function $r(n)$, as defined in Section \ref{example2}.

\subsection{Dedekind Zeta Functions of Quadratic Fields}
Let $K$ be an algebraic number field with $[K:\mathbb{Q}]=m$, $\mathcal{O}_K$ be the ring of integers, and $N(J)$ denote the absolute norm of an ideal $J$. Let $I(n)$ denote the number of ideals of absolute norm $n$. Then the Dedekind zeta function of $K$ is given by
$$\zeta_K(s)=\sum_{I\subseteq \mathcal{O}_K}\frac{1}{N(I)^s}=\sum_{n=1}^{\infty}\frac{I(n)}{n^s},$$
in its domain of absolute convergence. If $\Delta_K$ denotes the discriminant of $K$, $r_1$ is the number of real embeddings and $r_2$ is the number of conjugate pairs of complex embeddings of $K$, so that $m=r_1+2r_2$, then $\zeta_K(s)$ satisfies the functional equation
\begin{align}
    \Lambda(s)=\Lambda(1-s),
\end{align}
where $\Lambda(s)=|\Delta_K|^{\frac s2}2^{-sr_2}\pi^{-\frac {sm}{2}}\ \Gamma^{r_1}\left(\frac s2\right)\Gamma^{r_2}(s)\zeta_K(s)$. Note that 
$$I(n)\leq d_m(n)\ll_{m,\varepsilon}n^{\varepsilon},$$
where $d_m(n)$ is the $m$-fold divisor function. 

The case $r_1=2$ and $r_2=0$ corresponds to $K$ being a real quadratic field. Upon taking $b(n)=I(n)$ and $\mu_n=\pi n/\sqrt{|\Delta_K|}$, the associated Dirichlet series satisfies the functional equation considered in Theorem \ref{truncatedformula2} with $p=0$. Notable examples of real quadratic fields include $\mathbb{Q}(\sqrt{2})$, $\mathbb{Q}(\sqrt{3})$, and $\mathbb{Q}(\sqrt{5})$.

However, when $r_1=0$ and $r_2=1$, that is, when $K$ is an imaginary quadratic field, we take $b(n)=I(n)$ and $\mu_n=2\pi n/\sqrt{\Delta_K}$. The resulting Dirichlet series satisfies the functional equation considered in Theorem \ref{truncatedformula} with $r=1$. Examples of such fields include $\mathbb{Q}(i)$, $\mathbb{Q}(\sqrt{-3})$, and $\mathbb{Q}(\sqrt{-5})$.

In either case, by Theorem \ref{th1}, we have that $x^{-\frac 14}D_{0}(x)$ is $\mathcal{B}^4$-almost periodic.

\section{Acknowledgement}
The second author's research was funded by the ANRF under File No. MTR/2023/000837 and CRG/2023/002698.

\end{document}